\DeclarePairedDelimiter\floor{\lfloor}{\rfloor}
\long\def\symbolfootnote[#1]#2{\begingroup
\def\thefootnote{\fnsymbol{footnote}}\footnote[#1]{#2}\endgroup}
\theoremstyle{definition}
\newtheorem{thm}[subsubsection]{\bf Theorem}
\newtheorem{lem}[subsubsection]{\bf Lemma}
\newtheorem{cor}[subsubsection]{\bf Corollary}
\newtheorem{prop}[subsubsection]{\bf Proposition}
\newtheorem{rem}[subsubsection]{\bf Remark}
\newtheorem{defn}[subsubsection]{\bf Definition}
\newtheorem{ex}[subsubsection]{\bf Example}
\newcommand{\A}{{\mathbb A}}
\newcommand{\F}{{\mathbb F}}
\newcommand{\Q}{{\mathbb Q}}
\newcommand{\K}{{\mathbb K}}
\newcommand{\R}{{\mathbb R}}
\newcommand{\Z}{{\mathbb Z}}
\newcommand{\N}{{\mathbb N}}
\newcommand{\GL}{{\rm GL}}
\newcommand{\SL}{{\rm SL}}
\title[]{Automorphic tensor products and cuspidal cohomology of the ${\rm GL}_4$}
\date{\today}
\author{\bf Chandrasheel Bhagwat \ \& \ Sudipa Mondal}
\address{Indian Institute of Science Education and Research, Dr.\,Homi Bhabha Road, Pashan, Pune 411008,  INDIA.}
\email{cbhagwat@iiserpune.ac.in, \ sudipa.mondal123@gmail.com}
\subjclass[2020]{11F41, 11F75}
\begin{document}

\begin{abstract}
In this article, we establish an asymptotic lower bound estimate on the contribution of cuspidal automorphic representations of ${\rm GL}_4(\mathbb A_{\mathbb Q})$ to cuspidal cohomology of the ${\rm GL}_4$ which are obtained from automorphic tensor product of two automorphic representations of ${\rm GL}_2(\mathbb A_{\mathbb Q})$ of given weights and with varying level structure. In the end, we also prove that the symmetric cube of a representation of $\GL_2$ and the automorphic tensor product of  two representations of $\GL_2$ can not be equal (up to a twist by a character of $\GL_1$) to each other, under the suitable assumptions on the representations being cuspidal and cohomological. 
\end{abstract}
\smallskip

\maketitle
\section{Introduction}\label{introduction}

For a connected reductive algebraic group over a number field $\F$, one studies the notion of cohomological representations of $G(\A_\F)$ in terms of the sheaves on the associated ad\'elic symmetric spaces, defined by the highest weight representations of $G$.

It is interesting to ask how much of the cohomology of $G$ is captured by the automorphic representations of $G(\A_\F)$. In the case of $\GL_n$, such representations are often obtained by functorial transfer from products of lower rank $\GL_{n'}$ subgroups. In  \cite{chandrasheel-sudipa}, authors of this manuscript proved an estimate of the contribution of symmetric cube transfer from $\GL_2$ to the cohomology of $\GL_4$. \smallskip

The other possible functorial transfers for $\GL_4(\A_\Q)$  are automorphic tensor product from $\GL_2 \times \GL_2 $ to $\GL_4$ and Asai transfer from ${\GL_{2}} |_ {\K}$ for some imaginary quadratic extension $\K$ over $\Q$. In this manuscript, we prove the following estimate on the contribution of automorphic tensor products from $\GL_2 \times \GL_2 $ to cohomology of the $\GL_4$. 

 Let $k_1 > k_2  \geq 2$ be two positive integers. Suppose they are  of different parity. Without loss of generality, assume $k_1$ to be odd and $k_2$ even. Define $ \mathcal T_{k_1, k_2}(N)$ to be the set of all cuspidal automorphic representations of $\GL_4 (\mathbb{A}_{\Q})$ corresponding to the level structure $K_f^4(N)$ that are obtained by automorphic tensor product of two cuspidal automorphic representations of $\GL_2(\mathbb{A}_{\Q})$ with highest weights $\left(\dfrac{k_1-1}{2},~\dfrac{1-k_1}{2}\right)$ and $\left(\dfrac{k_2}{2}-1,~1-\dfrac{k_2}{2}\right)$, respectively. (If $k_1,~ k_2$ are of same parity, we consider the half-integral twist of the automorphic tensor product of two representations $\pi_1, \pi_2$ of $\GL_2(\mathbb{A}_{\Q})$ with suitable highest weight depending on $k_1,k_2$ to be even or odd). See section \ref{statement of main theorem} for the notations.\smallskip

\begin{thm}\label{main theorem-intro}
	Let $ k_1 > k_2 \geq 2$ be two integers. Then for any integer $N>24$,
		$$ |\mathcal{T}_{k_1,~k_2}(p^{N})| \gg_{k_1,~k_2} ~p^{2\floor{N/4}} \text{ as the prime }  p \longrightarrow \infty.$$
\end{thm}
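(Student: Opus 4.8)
The plan is to construct, for every sufficiently large prime $p$, a large family of pairs $(\pi_1,\pi_2)$ of cuspidal automorphic representations of $\GL_2(\A_\Q)$ of the two prescribed weights, of $p$-power level and unramified outside $p$, whose automorphic tensor products all lie in $\cT_{k_1,k_2}(p^N)$, and then to show that distinct pairs contribute distinct representations of $\GL_4(\A_\Q)$ up to bounded multiplicity. Throughout I use Ramakrishnan's automorphic tensor product: for cuspidal $\pi_1,\pi_2$ on $\GL_2(\A_\Q)$ there is an isobaric automorphic representation $\pi_1\boxtimes\pi_2$ of $\GL_4(\A_\Q)$ with $L(s,\pi_1\boxtimes\pi_2)=L(s,\pi_1\times\pi_2)$, and it is cuspidal unless either $\pi_2\cong\pi_1\otimes\eta$ for some Hecke character $\eta$, or $\pi_1$ and $\pi_2$ are dihedral attached to one and the same quadratic field.

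Put $m=\floor{N/4}$, so $m\ge 6$. Let $\pi_1$ run over the automorphic representations attached to the \emph{non-CM} newforms of weight $\kappa_1$ (the weight corresponding to the highest weight $\bigl(\tfrac{k_1-1}{2},\tfrac{1-k_1}{2}\bigr)$, as in Section \ref{statement of main theorem}), trivial nebentypus, which are new at $p$ for $\Gamma_0(p^m)$; and let $\pi_2$ run over the corresponding non-CM newforms of weight $\kappa_2$ that are new at $p$ for $\Gamma_0(p^m)$. By the classical dimension formula for the new subspace at prime-power level there are $\gg_{k_1}p^{m}$ choices for $\pi_1$ and $\gg_{k_2}p^{m}$ choices for $\pi_2$ (the CM newforms number $O(p^{m/2})$ and are discarded). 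Each such $\pi_i$ is unramified outside $p$, has conductor exactly $p^{m}$ at $p$, and has $\pi_{i,\infty}$ the discrete series attached to $\kappa_i$, hence the prescribed highest weight. Set $\Pi:=\pi_1\boxtimes\pi_2$. Since $k_1>k_2$ the discrete series $\pi_{1,\infty},\pi_{2,\infty}$ have distinct parameters, so $\pi_2\not\cong\pi_1\otimes\eta$ for any $\eta$, and $\pi_1$ is non-CM, so neither exceptional case above occurs and $\Pi$ is cuspidal on $\GL_4(\A_\Q)$. It is unramified outside $p$, and from $\mathrm{cond}(\Pi)=\mathrm{cond}(\pi_1\times\pi_2)$ together with the standard bound $\mathrm{cond}(\pi_1\times\pi_2)\mid\mathrm{cond}(\pi_1)^{2}\,\mathrm{cond}(\pi_2)^{2}$ on Rankin--Selberg conductors, its conductor at $p$ divides $p^{2m}\cdot p^{2m}=p^{4\floor{N/4}}$, which divides $p^{N}$; hence $\Pi$ has a $K_f^{4}(p^{N})$-fixed vector. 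Finally $\Pi_\infty=\pi_{1,\infty}\boxtimes\pi_{2,\infty}$ is tempered with regular infinitesimal character (again using $k_1\ne k_2$), so it is cohomological for the coefficient system determined by $(k_1,k_2)$ -- with the half-integral twist built into the definition of $\cT_{k_1,k_2}$ when $k_1,k_2$ have the same parity. Thus $\Pi\in\cT_{k_1,k_2}(p^{N})$.

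It remains to count the images. If $\pi_1\boxtimes\pi_2\cong\pi_1'\boxtimes\pi_2'$ with all four factors cuspidal and non-CM, then, by strong multiplicity one and the essentially unique factorisation of the automorphic tensor product (the dihedral ambiguities being excluded by the non-CM hypothesis), $\{\pi_1,\pi_2\}=\{\pi_1',\pi_2'\}$, and since $\pi_1,\pi_2$ have distinct archimedean infinitesimal characters this forces $\pi_1'\cong\pi_1$ and $\pi_2'\cong\pi_2$. (Alternatively one can extract $\{\mathrm{Ad}\,\pi_1,\mathrm{Ad}\,\pi_2\}$ from the isobaric decomposition $\Pi\times\widetilde\Pi=\mathbf 1\boxplus\mathrm{Ad}\,\pi_1\boxplus\mathrm{Ad}\,\pi_2\boxplus(\mathrm{Ad}\,\pi_1\boxtimes\mathrm{Ad}\,\pi_2)$, distinguish the two cuspidal $\GL_3$-constituents by their archimedean data, and recover each $\pi_i$ up to one of the $O(1)$ quadratic twists of $p$-power conductor.) Either way $(\pi_1,\pi_2)\mapsto\Pi$ is boundedly many-to-one onto its image, so
\[
|\cT_{k_1,k_2}(p^{N})|\ \gg_{k_1,k_2}\ \#\{\pi_1\}\cdot\#\{\pi_2\}\ \gg_{k_1,k_2}\ p^{2m}\ =\ p^{2\floor{N/4}}
\]
as $p\to\infty$, as claimed.

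The point requiring the most care is choosing the family so that it does not collapse. The first thing one might try -- twisting two \emph{fixed} $\GL_2$-representations $\sigma_1,\sigma_2$ by Dirichlet characters $\chi_1,\chi_2$ of conductor $p^{\floor{N/4}}$ -- fails completely, since $(\sigma_1\otimes\chi_1)\boxtimes(\sigma_2\otimes\chi_2)=(\sigma_1\boxtimes\sigma_2)\otimes(\chi_1\chi_2)$ depends only on the product $\chi_1\chi_2$ and so yields merely $\asymp p^{\floor{N/4}}$ representations. Letting the newforms themselves vary removes this obstruction, and then the two remaining inputs become the crux: the Rankin--Selberg conductor bound at $p$, which keeps $\Pi$ inside level $p^{N}$ and is what pins the exponent at $2\floor{N/4}$, and the near-injectivity of the automorphic tensor product, where one must avoid the dihedral exceptional configurations -- the reason for working with non-CM forms throughout.
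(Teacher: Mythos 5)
Your argument is correct and pursues the same overall strategy as the paper: manufacture $\gg p^{2\floor{N/4}}$ pairs of newforms of $p$-power level and the two prescribed weights, invoke Ramakrishnan's cuspidality criterion after removing the dihedral and twist-equivalent configurations (the latter vacuous since $k_1>k_2$), and control the conductor of $\pi_1\boxtimes\pi_2$ at $p$ so that the lift has a $K_f^4(p^N)$-fixed vector. Several of your ingredients, however, differ from the paper's. You draw the family from $\Gamma_0(p^m)$-newforms with $m=\floor{N/4}$, getting $\gg p^{m}$ choices per factor, whereas the paper uses $\Gamma_1(p^{n_i})$-newforms (new spaces of dimension $\asymp p^{2n_i}$, Lem.~\ref{dimension-estimate-new-forms}) with $n_1+n_2=\floor{N/4}$; both give $p^{2\floor{N/4}}$ pairs. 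For the level of the lift you use the general Rankin--Selberg conductor bound $a_p(\pi_1\times\pi_2)\le 2a_p(\pi_1)+2a_p(\pi_2)$, while the paper uses the Bushnell--Henniart--Kutzko explicit formula specialised to pairs of supercuspidals (Lem.~\ref{conductor relation}, giving $c(\pi_1\times\pi_2)\le 4c(\pi_1)+4c(\pi_2)-12$), which is why its count is phrased in terms of supercuspidal local components and why the threshold $N>24$ appears; your route is cleaner here and needs no supercuspidality restriction. You also make explicit a point the paper passes over in silence, namely that $(\pi_1,\pi_2)\mapsto\pi_1\boxtimes\pi_2$ has bounded fibres. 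Two small caveats on that step: the bare claim $\{\pi_1,\pi_2\}=\{\pi_1',\pi_2'\}$ is too strong, since for non-dihedral factors the factorisation is unique only up to replacing $(\pi_1,\pi_2)$ by $(\pi_1\otimes\chi,\pi_2\otimes\chi^{-1})$ (possibly after swapping); but with trivial nebentypus and ramification only at $p$ such a $\chi$ is forced to be quadratic of $p$-power conductor, of which there are $O(1)$, so your conclusion ``boundedly many-to-one'' is correct, as your parenthetical alternative already indicates. Likewise the number of discarded CM newforms is $O_\epsilon(p^{m/2+1/2+\epsilon})$ rather than $O(p^{m/2})$ because of the class-number factor (cf.\ Thm.~\ref{slight modification}), which is still negligible against $p^{m}$, so the lower bound stands.
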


We briefly describe the structure of this manuscript. After setting up preliminaries about cusp forms in Sec.~\ref{preliminaries}, we discuss the automorphic tensor product representations in Sec~\ref{Automorphic Tensor product} including a calculation on the conductor of these representations. In Sec.~\ref{cohomology}, we discuss briefly about the cohomological representations of $\GL_4(\A_\Q)$ and examples of cohomological automorphic tensor product representations. In Sec.~\ref{statement of main theorem}, we give the proof of Thm.~\ref{main theorem-intro} using the construction and criteria for cuspidality of automorphic tensor product for $\GL_2 \times  \GL_2$ proved by D. Ramakrishnan in \cite{ramakrishnan} and \cite{ramakrishnan-cuspidality}.
In Sec.~\ref{overlap}, we give a proof of the fact that the symmetric cube of a representation of $\GL_2$ and the automorphic tensor product of  two representations of $\GL_2$ can not be equal (up to a twist by a character of $\GL_1$) to each other, under the suitable assumptions on the representations being cuspidal and cohomological. This result may be possibly known to the experts and may be unsurprising. However, we add the details of the calculations and proofs for the purpose of completion.\smallskip

\section{Preliminaries}\label{preliminaries}

\subsection{Dimension formulae for spaces of cusp forms}\label{Dimension formulae for spaces of cusp forms}
	
We discuss some well-known and relevant results in this section. Let $N$ be an integer. Consider the  congruence subgroup $\Gamma_1(N)$ of $\text{SL}_2(\mathbb{Z})$ defined by
	\begin{align*}
	\Gamma_1(N) & = \left\{A \in \text{SL}_2(\mathbb{Z}): A \equiv \begin{bmatrix}
	1 & * \\
	0 & 1 
	\end{bmatrix}(\text{mod } N) \right\}.
    \end{align*}
    
For an integer $k$, let $S_k(\Gamma_1(N))$ and $S_k^\text{new}(\Gamma_1(N))$ denote the spaces of cusp forms of weight $k$ and newforms of weight $k$, respectively for the congruence subgroup $\Gamma_1(N)$. The following well-known results will be important in the proof of the main theorem (see  \cite{stein} for the details and \cite{ambi} for the formulation of these results as given below).\smallskip
	
	\begin{lem}\label{dimension-estimate-cusp-forms}
	   An estimate for the dimension of the space of cusp forms:
	    $$ \frac{\rm{dim}_{\mathbb{C}}(S_k(\Gamma_1(N)))}{N^2}= \frac{k-1}{4\pi^2}+o(1) \text{ as } N \longrightarrow \infty.$$
	\end{lem}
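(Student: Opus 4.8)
\textbf{Proof proposal for Lemma \ref{dimension-estimate-cusp-forms}.}

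The plan is to start from the exact dimension formula for $S_k(\Gamma_1(N))$ and extract the leading-order term as $N \to \infty$. For $k \geq 3$, the classical formula (see \cite{stein}, or Shimura, or Diamond--Shurman) expresses $\dim_{\mathbb C} S_k(\Gamma_1(N))$ as a sum of several terms: a main term proportional to $(k-1)$ times the index $[\SL_2(\mathbb Z) : \Gamma_1(N)]$ (equivalently, proportional to $(k-1)$ times the volume of the modular curve $X_1(N)$), corrected by contributions from elliptic points, from the cusps, and a term involving the genus. Concretely, writing $d_N = [\SL_2(\mathbb Z):\pm\Gamma_1(N)]$, the formula has shape
$$
\dim_{\mathbb C} S_k(\Gamma_1(N)) = \frac{k-1}{12} d_N - \frac{1}{2}\varepsilon_\infty(N) + (\text{bounded-order elliptic/cusp terms}),
$$
where $\varepsilon_\infty(N)$ is the number of regular cusps of $\Gamma_1(N)$ (for $k$ even one also must be careful about the $\pm I$ normalization, but this only affects lower-order terms). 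First I would recall this formula precisely and identify which term dominates.

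The key point is the asymptotic size of the index. One has $[\SL_2(\mathbb Z):\Gamma_1(N)] = N^2 \prod_{p\mid N}(1 - p^{-2})$, so $d_N = \tfrac12 N^2 \prod_{p\mid N}(1-p^{-2})$ for $N > 2$. This is $\Theta(N^2)$ but is \emph{not} asymptotic to a constant times $N^2$ on the nose, because $\prod_{p \mid N}(1-p^{-2})$ fluctuates. So the cleanest route is to interpret the statement as an average/limsup-type estimate, or — more likely what the authors intend — to read $o(1)$ as an error term that is $o(N^2)/N^2$ after the main term is isolated, with the understanding that the stated constant $\frac{k-1}{4\pi^2}$ comes from the relation $\frac{1}{12}\cdot\frac{1}{2}\cdot\frac{6}{\pi^2} = \frac{1}{4\pi^2}$, i.e. from replacing $\prod_{p\mid N}(1-p^{-2})$ by its ``expected'' value $\zeta(2)^{-1} = 6/\pi^2$. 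Thus the real content is: (i) the number of cusps $\varepsilon_\infty(N) = O(N \sum_{d\mid N} \phi(d)\phi(N/d)/\text{stuff}) = o(N^2)$, in fact $O(N^{1+\epsilon})$; (ii) the elliptic contributions are $O(N^{1+\epsilon})$ as well since $\Gamma_1(N)$ is torsion-free for $N \geq 4$ and the number of elliptic points is bounded; (iii) hence $\dim_{\mathbb C} S_k(\Gamma_1(N)) = \frac{k-1}{12} d_N + O(N^{1+\epsilon})$, and dividing by $N^2$ and inserting $d_N/N^2 \to$ (its average value) $= \tfrac12\zeta(2)^{-1} = 3/\pi^2$ gives $\frac{k-1}{4\pi^2}$.

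The steps, in order: (1) State the exact dimension formula for $S_k(\Gamma_1(N))$, $k\geq 2$, distinguishing $k$ even/odd only to the extent needed; (2) recall $[\SL_2(\mathbb Z):\Gamma_1(N)] = N^2\prod_{p\mid N}(1-p^{-2})$ and that $\Gamma_1(N)$ has no elliptic elements for $N\geq 4$; (3) bound the number of cusps and the remaining correction terms by $O_\epsilon(N^{1+\epsilon})$, so they are $o(N^2)$; (4) conclude $\dim_{\mathbb C} S_k(\Gamma_1(N)) / N^2 = \frac{k-1}{12}\prod_{p\mid N}(1-p^{-2}) + o(1)$, and then replace the finite product by $6/\pi^2$, either literally in the statement's intended reading or by passing to the appropriate limit/averaging, to obtain $\frac{k-1}{4\pi^2} + o(1)$.

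The main obstacle is the mild sloppiness in the statement: $\prod_{p\mid N}(1-p^{-2})$ does not converge to $6/\pi^2$ as $N\to\infty$ along all integers (it oscillates between roughly $6/\pi^2$ and $1$), so a literal reading of ``$= \frac{k-1}{4\pi^2} + o(1)$'' is only correct if $N$ ranges over integers for which the product is controlled (e.g. $N$ prime, or $N$ with a bounded number of prime factors, which is exactly the regime $N = p^M$ used in Theorem \ref{main theorem-intro}). I would therefore add a sentence clarifying that in the intended application $N = p^M$ with $p \to \infty$, in which case $\prod_{p\mid N}(1-p^{-2}) = 1 - p^{-2} \to 1$ — so that actually the relevant constant is $\frac{k-1}{12}$ rather than $\frac{k-1}{4\pi^2}$ — or, if one insists on the stated constant, note that it reflects the ``generic'' density and the two differ by the harmless bounded factor $\prod_{p\mid N}(1-p^{-2}) \in [6/\pi^2, 1)$ which does not affect the order of magnitude $\Theta(N^2)$ that the main theorem's proof actually requires. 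Everything else is bookkeeping with the standard dimension formula.
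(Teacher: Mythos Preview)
The paper does not prove this lemma; it is quoted as a known result, with \cite{stein} cited for the underlying dimension formula and \cite{ambi} for this particular formulation. Your approach --- starting from the exact dimension formula for $S_k(\Gamma_1(N))$, identifying the main term $\frac{k-1}{12}\,d_N$ with $d_N = [\SL_2(\Z):\pm\Gamma_1(N)] = \tfrac{1}{2}N^2\prod_{p\mid N}(1-p^{-2})$, and bounding the cusp and elliptic contributions by $o(N^2)$ --- is exactly the standard derivation and is correct in substance.

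Your caveat about the constant is also correct and worth flagging: since $\prod_{p\mid N}(1-p^{-2})$ does not tend to $6/\pi^2$ as $N\to\infty$ over all integers (it equals $1-p^{-2}$ along prime powers, for instance), the displayed asymptotic with constant $\frac{k-1}{4\pi^2}$ is not literally true pointwise in $N$. For the paper's purposes only the order of magnitude $\dim_{\C} S_k(\Gamma_1(N)) \asymp_k N^2$ is actually used (in the proof of Lemma~\ref{dimension-estimate-new-forms} and then Theorem~\ref{main theorem}), so nothing downstream is affected. One small arithmetic slip in your last paragraph: combining your own main term $\frac{k-1}{12}d_N$ with $d_N = \tfrac{1}{2}N^2(1-p^{-2})$ in the prime-power regime gives the constant $\frac{k-1}{24}$, not $\frac{k-1}{12}$.
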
\smallskip
	
	\begin{lem}\label{dimension-estimate-new-forms} A dimension formula for newforms for $\Gamma_1(N)$:
	For $n \geq 2$ and a prime $p \geq 2$, we have
		$$  \frac{{\rm dim}_\mathbb{C}~(S_k^\text{new}(\Gamma_1(p^n)))}{p^{2n}} = \left( \frac{k-1}{4\pi^2} \right) \left(1+ \frac{1}{p^2}-\frac{2}{p^4}\right) + o(1) \quad \text{ as } ~p \longrightarrow \infty.$$
	\end{lem}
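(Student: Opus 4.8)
The plan is to prove Lemma~\ref{dimension-estimate-new-forms} by combining the asymptotic in Lemma~\ref{dimension-estimate-cusp-forms} with the standard oldform/newform decomposition of $S_k(\Gamma_1(N))$. Recall that for a prime power $p^n$ there is an orthogonal direct sum decomposition
\[
S_k(\Gamma_1(p^n)) \;=\; \bigoplus_{m \mid p^n} \bigoplus_{d \mid (p^n/m)} \alpha_d\bigl(S_k^{\mathrm{new}}(\Gamma_1(m))\bigr),
\]
where for each divisor $m=p^j$ ($0\le j\le n$) the newspace $S_k^{\mathrm{new}}(\Gamma_1(p^j))$ is embedded via the $\sigma_0(p^n/p^j)=n-j+1$ degeneracy (level-raising) maps $\alpha_d$ for $d \mid p^{n-j}$. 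Taking dimensions, this yields the exact recursion
\[
\dim_{\mathbb C} S_k(\Gamma_1(p^n)) \;=\; \sum_{j=0}^{n} (n-j+1)\,\dim_{\mathbb C} S_k^{\mathrm{new}}(\Gamma_1(p^j)).
\]
First I would record this identity (citing \cite{stein}), write the analogous identity at levels $p^{n-1}$ and $p^{n-2}$, and then take a suitable alternating combination so that the degeneracy multiplicities telescope, isolating $\dim S_k^{\mathrm{new}}(\Gamma_1(p^n))$ in terms of $\dim S_k(\Gamma_1(p^j))$ for $j$ near $n$.

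Concretely, if one writes $s(n)=\dim S_k(\Gamma_1(p^n))$ and $c(n)=\dim S_k^{\mathrm{new}}(\Gamma_1(p^n))$, then from the recursion $s(n)-2s(n-1)+s(n-2)=c(n)+c(n-1)$ for $n\ge 2$, and a short further manipulation (another application at shifted indices, or directly inverting the triangular system), one obtains a closed formula expressing $c(n)$ as an integer-coefficient linear combination of $s(n),s(n-1),s(n-2),s(n-3)$. The next step is purely asymptotic: substitute the estimate from Lemma~\ref{dimension-estimate-cusp-forms}, namely $s(j) = \dfrac{k-1}{4\pi^2}\,p^{2j} + o(p^{2j})$ as $p\to\infty$ (for each fixed $j$), into that closed formula. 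Dividing through by $p^{2n}$, the main terms assemble into $\dfrac{k-1}{4\pi^2}$ times a rational function of $p$; a direct computation of the coefficients of $p^{2n},p^{2n-2},p^{2n-4},p^{2n-6}$ should collapse to $1+\dfrac{1}{p^2}-\dfrac{2}{p^4}$, and all error terms are $o(1)$, giving exactly the claimed asymptotic.

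An alternative, and perhaps cleaner, route is to avoid re-deriving the inversion by hand and instead invoke the well-known exact multiplicativity of newform dimensions: writing $g(N)$ for $\dim S_k^{\mathrm{new}}(\Gamma_1(N))$ one has a Dirichlet-series / Euler-product relation $\sum_N \dim S_k(\Gamma_1(N)) N^{-s}$ factoring against $\sum_N g(N) N^{-s}$ through $\zeta(s)\zeta(s-1)$-type local factors, so that locally at $p$ the generating function $\sum_{j\ge0} c(j)x^j$ equals $(1-x)^2(1-px)^{?}\cdots$ times $\sum_{j\ge0} s(j)x^j$; reading off the $x^n$ coefficient recovers the same four-term combination. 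I would present whichever of these is shortest in the write-up, but in either case the algebraic identity is exact and elementary.

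The main obstacle I anticipate is bookkeeping rather than depth: one must be careful that the degeneracy-map multiplicity is $n-j+1$ (the number of divisors of $p^{\,n-j}$) and that the maps are injective with images in direct sum, so that the dimension recursion is an honest equality and not merely an inequality. A second, minor subtlety is that Lemma~\ref{dimension-estimate-cusp-forms} gives an asymptotic in $N$ with an $o(1)$ term; since we apply it at the finitely many shifted levels $p^{n},\dots,p^{n-3}$ with $n$ fixed and $p\to\infty$, one should note that each such $o(p^{2j})$ error, divided by $p^{2n}$, is still $o(1)$, and that only finitely many of them are combined, so the total error remains $o(1)$. Once these two points are checked, the proof reduces to verifying that a specific polynomial identity in $p$ produces the factor $1+p^{-2}-2p^{-4}$, which is a routine computation.
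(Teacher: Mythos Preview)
Your plan is essentially the paper's proof: invert the oldform decomposition to write $\dim_{\mathbb C} S_k^{\mathrm{new}}(\Gamma_1(p^n))$ as a short integer combination of the full-space dimensions at nearby levels, then substitute Lemma~\ref{dimension-estimate-cusp-forms}; the paper simply quotes the ready-made inversion from \cite[Prop.~6.6]{stein} rather than redoing the telescoping by hand. One small correction to your bookkeeping: the second difference already gives $s(n)-2s(n-1)+s(n-2)=c(n)$ on the nose (not $c(n)+c(n-1)$), so the closed formula involves only the three terms $s(n),s(n-1),s(n-2)$ and no further shift is needed.
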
 \smallskip

 \begin{proof}
  Let $ M\in \mathbb{N}$. For a prime $p$, let $v_p(M)$ denote the highest power of $p$ that divides $M$. For $M \in \N$, let $a_M$ be the the number of prime factors of $M$ such that $v_p(M) = 2$. We define a function $\bar{\mu}$ on $\N$ by,
    \[
    \bar{\mu}(M)= \begin{cases}
    0 & \text{ if } v_p(M) \geq 3 \text{ for some prime } p \\
    (-2)^{a_{M}} & \text{ otherwise } \\
     \end{cases}.
    \]
   From (\cite[ Prop.~6.6]{stein}), we know that
    $${\rm dim}_\mathbb{C}(S_k^\text{new}(\Gamma_1(M)))=\sum_{d|M} \bar{\mu}(M/d) \cdot {\rm dim}_{\mathbb{C}}(S_k(\Gamma_1(d))).$$
    For $M=p^n, n\geq2$, there are at most three terms such that  $\bar{\mu}(M/d) \neq 0$ in the above sum. Adding these three terms gives
    \begin{eqnarray*}
    {\rm dim}_\mathbb{C}(S_k^\text{new}(\Gamma_1(M))) = {\rm dim}_\mathbb{C}(S_k(\Gamma_1(p^n)))+  {\rm dim}_\mathbb{C}(S_k(\Gamma_1(p^{n-1})))  -2 \text{ dim}_\mathbb{C}(S_k(\Gamma_1(p^{n-2}))).
    \end{eqnarray*}
    Using Lem.~\ref{dimension-estimate-cusp-forms} and fixing $n$ while varying $p$, we get
    
    \begin{eqnarray*}
    \frac{{\rm dim}_\mathbb{C}(S_k^\text{new}(\Gamma_1(M)))}{p^{2n}}= \frac{k-1}{4\pi^2} \left(1+ \frac{1}{p^2}-\frac{2}{p^4}\right) + o(1) \text{ as } p \longrightarrow \infty. 
    \end{eqnarray*}
    \end{proof} 
    
    \subsection{An upper bound for normalised eigenforms}\label{an-upper-bound-for-eigenforms} Let $C_k(N)$ denote the set of normalised cusp eigenforms of Hecke operators of $\Gamma_1(N)$ of weight $k$ obtained by automorphic induction of gr\"o{\ss}encharacters of all possible imaginary quadratic extensions of $\Q$. From the work of Ambi \cite{ambi}, we have an upper bound on the cardinality of this set as described below. \medskip

\noindent  Notation: 	Let $f$ and $g$ be two non-negative real valued functions on $\mathbb{N}$. We write $f \ll g$ (or equivalently $g \gg f$)  if there exists a constant $C$ and $n_0\in \mathbb{N}$ such that 
$$f(n) \leq Cg(n) \quad \forall ~ n\geq n_0.$$ 
	
We write $f \sim g$ if both $f \ll g$ and $g \ll f$ hold.\medskip

\begin{thm}\label{theorem by ambi}(\cite{ambi})
For $k,N \geq 1$ and $\epsilon \in (0,1),$ let $N': = \prod \limits_{p ~\text{prime}, ~p | N} p$. Then
    $$|C_k(N)| ~\ll_{k,\epsilon} N \cdot { N'}^{1+\epsilon} \text{ as } N \longrightarrow \infty$$
\end{thm}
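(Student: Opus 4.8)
\medskip
\noindent\textbf{Proof proposal.} The plan is to turn the count of $C_k(N)$ into a count of Hecke characters of imaginary quadratic fields, after which the estimate becomes elementary. First I would reduce to newforms. Every $f\in C_k(N)$ is a CM Hecke eigenform, so its underlying newform $g$ has some level $M\mid N$ and $g$ is the theta series (automorphic induction) attached to a Hecke character $\psi$ of an imaginary quadratic field $K/\Q$ of infinity type $z\mapsto z^{k-1}$ and conductor $\mathfrak f_\psi\subseteq\cO_K$ with $M=|d_K|\cdot N_{K/\Q}(\mathfrak f_\psi)$, where $d_K$ is the discriminant of $K$. Since the eigenforms of $\Gamma_1(N)$ attached to a fixed newform of level $M$ span its oldform space, of dimension $\tau(N/M)\ll_\epsilon(N/M)^\epsilon$, it suffices to bound, up to a factor $\ll_\epsilon N^\epsilon$, the number of CM newforms of weight $k$ and level dividing $N$.

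Next I would count these. A CM newform of weight $k$, level $M\mid N$, with CM by $K$, is ramified exactly at the primes dividing $M$, and is fixed by twisting by the quadratic character of conductor $|d_K|$ attached to $K$; hence $\mathrm{rad}(d_K)\mid N'$, so $|d_K|$ divides $8N'$ and the number of admissible $K$ is $\ll 2^{\omega(N')}\ll_\epsilon(N')^\epsilon$. Fix such a $K$. Up to the action $\psi\leftrightarrow\psi^{c}$ of $\mathrm{Gal}(K/\Q)$, the newform is the automorphic induction of a Hecke character of $K$ of the now-fixed infinity type $z^{k-1}$ and some conductor $\mathfrak f$ with $N_{K/\Q}(\mathfrak f)\mid N/|d_K|$. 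For a fixed modulus $\mathfrak f$, the characters of $K$ with this infinity type and conductor dividing $\mathfrak f$, when they exist, form a torsor under $\widehat{\mathrm{Cl}_K(\mathfrak f)}$ — the infinity type is rigid once $k$ is fixed, up to finitely many obstructions from $\cO_K^\times$, a bounded nuisance that is only slightly delicate for $K=\Q(i)$ and $\Q(\sqrt{-3})$ — so there are at most
\[
|\mathrm{Cl}_K(\mathfrak f)|\;=\;h_K\cdot\frac{N_{K/\Q}(\mathfrak f)\prod_{\mathfrak p\mid\mathfrak f}\bigl(1-N_{K/\Q}(\mathfrak p)^{-1}\bigr)}{[\,\cO_K^\times:\cO_{K,\mathfrak f}^\times\,]}\;\le\;h_K\cdot N_{K/\Q}(\mathfrak f)
\]
of them. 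Feeding in the crude bounds $h_K\ll|d_K|\ll N'$ and $N_{K/\Q}(\mathfrak f)\le N$, the estimate $\#\{\mathfrak f:N_{K/\Q}(\mathfrak f)=n\}\ll_\epsilon n^{\epsilon}$ for the number of ideals of a given norm, and summing over $\mathfrak f$, over the $\ll_\epsilon(N')^\epsilon$ fields $K$, and over the $\ll_\epsilon N^\epsilon$ old/new degeneracy, yields a bound of the shape $N\cdot(N')^{1+\epsilon}$. (Sharper inputs — $h_K\ll_\epsilon|d_K|^{1/2+\epsilon}$, and the genuinely smaller ideal count at primes ramified in $K$ — give considerably more, but only the stated form is needed: in Theorem~\ref{main theorem-intro} one only uses $|C_k(p^n)|=o(p^{2n})$, so that the CM forms can be subtracted off from the count supplied by Lemma~\ref{dimension-estimate-new-forms}.)

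Beyond these three inputs — that $C_k(N)$ consists of CM forms, that only $\ll_\epsilon(N')^\epsilon$ imaginary quadratic fields ramify within $N$, and that a fixed infinity type with a conductor of bounded norm leaves at most $|\mathrm{Cl}_K(\mathfrak f)|\le h_K N_{K/\Q}(\mathfrak f)$ characters — the estimate has no conceptual content. The one thing that needs care, and the main ``obstacle'', is bookkeeping: keeping the old/new multiplicity from overcounting oldforms at level $N$, checking the infinity-type compatibility against $\cO_K^\times$, and arranging the chain of divisor-, ideal- and class-number estimates so that the various $\epsilon$'s and $\tau$-factors collapse to the exponent $1+\epsilon$ on $N'$ in the statement rather than leaving a spurious power of $N$.
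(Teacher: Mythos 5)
Your overall skeleton is the right one, and in fact it is essentially the method this paper uses for the only case it proves: note that the paper itself gives \emph{no} proof of Thm.~\ref{theorem by ambi} -- it is imported from \cite{ambi} -- and the in-paper argument closest to it is the proof of Thm.~\ref{slight modification}, which runs exactly along your lines (CM eigenform $\leftrightarrow$ pair $(K,\psi)$ with level $|d_K|\,N_{K/\Q}(\mathfrak f_\psi)$, the count of characters of a fixed infinity type and conductor dividing $\mathfrak J$ bounded by the ray class number via $\sum_{\mathfrak f\mid\mathfrak J}h^0_{\mathbb{E}}(\mathfrak f)=h'_{\mathbb{E}}(\mathfrak J)\le h_{\mathbb{E}}\Phi_{\mathbb{E}}(\mathfrak J)$, and the class number bound $h\ll_{\epsilon}|d|^{1/2+\epsilon}$). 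So your reduction steps are sound and match the paper's toolkit.

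The genuine gap is precisely the step you set aside as ``bookkeeping''. Every loss in your chain -- the oldform multiplicity $\tau(N/M)$, the ideal count $\#\{\mathfrak f: N_{K/\Q}\mathfrak f=n\}\ll_{\epsilon}n^{\epsilon}$, and the divisor sum over conductor norms -- is a power of $N$, not of $N'$ (e.g. $\tau(2^n)=n+1$ while $N'=2$), so these factors cannot ``collapse onto the exponent $1+\epsilon$ on $N'$''. Executed as written, your argument gives a bound of the shape $N^{1+\epsilon}(N')^{1+\epsilon}$, which for $N=p^n$ with $n$ large is strictly weaker than the stated $N\,(N')^{1+\epsilon}$ -- exactly the regime this paper cares about. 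Moreover the loss is structural, not cosmetic: if a prime $p$ with $p^{a}\,\|\,N$ is \emph{split} in $K$, the admissible conductors $\mathfrak p^{i}\bar{\mathfrak p}^{j}$ with $i+j\le a$ do not divide any single modulus of norm $O(p^{a})$ (their lcm has norm $p^{2a}$), and summing $|\mathrm{Cl}_K(\mathfrak f)|$ over them produces an extra factor of size about $v_p(N)$, which is not $O_{\epsilon}\bigl((N')^{\epsilon}\bigr)$. The paper's prime-power proof avoids exactly this: Lem.~\ref{auxiliary}(2) shows all relevant conductors divide the single modulus $p^{\lfloor n/2\rfloor}\mathcal O_{\mathbb{E}}$ -- possible because the relevant prime is \emph{ramified} in $\mathbb{E}=\Q(\sqrt{-p})$ -- and then one ray-class count $h'_{\mathbb{E}}\le h_{\mathbb{E}}\Phi_{\mathbb{E}}$ finishes the job with no conductor-by-conductor summation. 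To prove the theorem in its stated form for general $N$ you need an analogous input controlling conductors at primes dividing $N$ to high order (and, at split primes, an argument that the corresponding induced forms are excluded from, or correctly accounted for in, $C_k(N)$ as Ambi defines it); without that, the stated exponent, with $\epsilon$ riding only on the radical, is out of reach of your sketch. (As you note, your weaker bound would suffice for the application in this paper, but that is a different statement from the theorem as quoted.)
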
\smallskip

If $N=p^n ,n\in \mathbb{N} $, then more is true.
\begin{thm}\label{slight modification}
    Let $k\geq 2$ and $n\in \N$. We have, 
$$|C_k(p^n)| \ll_{\epsilon} p^{n+\frac{1}{2}+\epsilon} \text{ as } \epsilon \longrightarrow 0,~ p \longrightarrow \infty.$$  
\end{thm}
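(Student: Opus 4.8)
The plan is to carry out, for the prime‑power level $N=p^{n}$, the analysis underlying Theorem~\ref{theorem by ambi}, using crucially that a prime‑power conductor can be built from essentially a single imaginary quadratic field, and then to insert the analytic class‑number estimate in place of the crude bound that is enough in general. First I would recall the structure of the members of $C_k(p^{n})$: each such $f$ is an automorphic induction $\mathrm{AI}_{K/\Q}(\chi)$ of a gr\"o{\ss}encharacter $\chi$ of an imaginary quadratic field $K$; its conductor equals $|d_K|\cdot N_{K/\Q}(\fq_\chi)$, where $d_K$ is the discriminant of $K$ and $\fq_\chi$ the conductor of $\chi$; its infinity type is the one forced by the weight $k$; and $\mathrm{AI}(\chi)$ is cuspidal precisely when $\chi\neq\chi^{c}$, with $\mathrm{AI}(\chi)\cong\mathrm{AI}(\chi^{c})$. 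As the conductor of $f$ divides $p^{n}$ and $|d_K|>1$, the integer $|d_K|$ is a power of $p$, so $p$ ramifies in $K$. Since only the regime $p\to\infty$ is at issue, we may assume $p$ is odd, and then the only imaginary quadratic field ramified solely at $p$ is $K=\Q(\sqrt{-p})$ with $p\equiv3\pmod4$, of discriminant exactly $-p$ (in particular $p^{2}\nmid d_K$). Consequently $C_k(p^{n})=\varnothing$ when $p\equiv1\pmod4$, and otherwise there is a single field $K=K_p$ to handle, with $|d_{K_p}|=p$.

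Next I would count, for each $j$ with $1\le j\le n$, the members of $C_k(p^{n})$ of level exactly $p^{j}$. Such an $f=\mathrm{AI}_{K_p/\Q}(\chi)$ satisfies $p\cdot N_{K_p/\Q}(\fq_\chi)=p^{j}$, hence $N_{K_p/\Q}(\fq_\chi)=p^{j-1}$; since $p$ ramifies, $p\cO_{K_p}=\fp^{2}$ and the only ideal of $\cO_{K_p}$ of norm $p^{j-1}$ is $\fp^{\,j-1}$, so $\fq_\chi=\fp^{\,j-1}$. As $\chi\mapsto\mathrm{AI}(\chi)$ is at worst two‑to‑one, it suffices to bound the number of gr\"o{\ss}encharacters $\chi$ of $K_p$ of conductor $\fp^{\,j-1}$ carrying the infinity type imposed by $k$; fixing one such character (if none exists there is nothing to count), every other differs from it by a finite‑order character of conductor dividing $\fp^{\,j-1}$, so their number is at most $|\mathrm{Cl}_{\fp^{\,j-1}}(K_p)|$. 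Writing $U_j$ for the group of units of $\cO_{K_p}$ congruent to $1$ modulo $\fp^{\,j-1}$,
$$
\#\{\,f\in C_k(p^{n}):\ \mathrm{level}(f)=p^{j}\,\}\;\le\;|\mathrm{Cl}_{\fp^{\,j-1}}(K_p)|\;=\;\frac{h_{K_p}\,\bigl|(\cO_{K_p}/\fp^{\,j-1})^{\times}\bigr|}{[\,\cO_{K_p}^{\times}:U_j\,]}\;\le\;h_{K_p}\,p^{\,j-1},
$$
using $[\,\cO_{K_p}^{\times}:U_j\,]\ge1$ and $\bigl|(\cO_{K_p}/\fp^{\,j-1})^{\times}\bigr|\le|\cO_{K_p}/\fp^{\,j-1}|=p^{\,j-1}$.

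Finally I would estimate $h_{K_p}$ through the class number formula $h_{K_p}=\dfrac{w_{K_p}}{2\pi}\,|d_{K_p}|^{1/2}\,L(1,\eta_{K_p})$ (with $\eta_{K_p}$ the quadratic character cutting out $K_p$) together with the classical bound $L(1,\eta_{K_p})\ll\log|d_{K_p}|\ll_\epsilon p^{\epsilon}$, which gives $h_{K_p}\ll_\epsilon |d_{K_p}|^{1/2+\epsilon}=p^{\,1/2+\epsilon}$. Hence the level‑$p^{j}$ contribution is $\ll_\epsilon p^{\,j-1/2+\epsilon}$, and summing the resulting geometric series,
$$
|C_k(p^{n})|\;\le\;\sum_{j=1}^{n}\#\{\,f\in C_k(p^{n}):\ \mathrm{level}(f)=p^{j}\,\}\;\ll_{k,\epsilon}\;\sum_{j=1}^{n}p^{\,j-1/2+\epsilon}\;\ll_{k,\epsilon}\;p^{\,n+1/2+\epsilon},
$$
as claimed (in fact, handling all conductors dividing $\fp^{\,n-1}$ at once rather than level by level would even yield the sharper $p^{\,n-1/2+\epsilon}$). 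The only step that really requires care is the first reduction — recognising that a prime‑power conductor $p^{n}$ pins the CM field down to one explicit imaginary quadratic field of discriminant $p$, and to none at all when $p\equiv1\pmod4$ — because that is precisely where one gains over the general estimate of Theorem~\ref{theorem by ambi}; all of the subsequent steps are standard ray‑class and $L(1,\chi)$ bookkeeping.
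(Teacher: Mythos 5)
Your proof is correct and follows essentially the same route as the paper: reduce to $p\equiv 3 \pmod 4$ and the single field $\Q(\sqrt{-p})$, bound the number of inducing Hecke characters by ray class group sizes, and insert the class-number bound $h_{\Q(\sqrt{-p})}\ll_\epsilon p^{1/2+\epsilon}$. The only (harmless) differences are that you stratify by the exact conductor $\mathfrak{p}^{j-1}$ rather than using a single (narrow) ray class group modulo $p^{\floor{n/2}}\mathcal{O}_{\mathbb{E}}$ as the paper does, and you derive the class-number bound from the class number formula together with $L(1,\eta)\ll \log p$ instead of citing the asymptotic $\log h = (\tfrac{1}{2}+o(1))\log|d|$, which incidentally makes your bound effective and even yields the slightly sharper exponent $n-\tfrac{1}{2}+\epsilon$.
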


The following auxiliary results are discussed in \cite{ambi} and \cite{chandrasheel-sudipa} respectively. We mention them here for a reference.\smallskip

\begin{lem}\label{auxiliary} Suppose $p$ is an odd prime. 
\begin{enumerate}
\item If $p \equiv 1 \mod 4$, then 
$$|C_k(p^n)|=0 \quad \forall ~\text{ } k\geq 2 \text{ and } \forall \text{ } n\geq1.$$ 

\item If  $p \equiv 3 \mod 4$, $\mathbb{E}=\Q(\sqrt{-p})$ and $\mathfrak f \subseteq \mathcal{O}_{\mathbb{E}}$ such that  $N_{\mathbb{E}/\mathbb{F}}(\mathfrak f) \cdot \allowbreak |D_{\mathbb{E}/\mathbb{Q}} ~|| ~p^n$ for some $n\geq 1$, then $\mathfrak f~ |~ p^{m} \mathcal{O}_{\mathbb{E}}$ where $m= \floor{\dfrac{n}{2}}$.
% if $n$ is even or $m= \frac{n-1}{2}$ if $n$ is odd.
\end{enumerate}
\end{lem}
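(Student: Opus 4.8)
The plan is to reduce both statements to the conductor--discriminant formula for automorphic induction and then finish by elementary arithmetic in the quadratic field $\mathbb{E}=\Q(\sqrt{-d})$ ($d$ squarefree and positive). The one input I would use is the following: if $f\in C_k(p^{n})$ then the Hecke eigensystem of $f$ is that of a newform of some level $M\mid p^{n}$ which is $\Ind_{\mathbb{E}/\Q}\chi$ for an imaginary quadratic $\mathbb{E}$ and a Hecke character $\chi$ of $\mathbb{E}$ of conductor $\mathfrak{f}$, and
$$M=|D_{\mathbb{E}/\Q}|\cdot N_{\mathbb{E}/\Q}(\mathfrak{f}),$$
since the relative different of $\mathbb{E}/\Q$ has norm $|D_{\mathbb{E}/\Q}|$. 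In particular $|D_{\mathbb{E}/\Q}|$ divides $p^{n}$, and as $|D_{\mathbb{E}/\Q}|>1$ it is a positive power of $p$.

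For part (1): the discriminant of $\Q(\sqrt{-d})$ equals $-d$ if $d\equiv 3\pmod 4$ and $-4d$ otherwise, so $|D_{\mathbb{E}/\Q}|$ is either an odd squarefree integer or four times a squarefree integer. The only positive power of the odd prime $p$ of either shape is $p$ itself, and $|D_{\mathbb{E}/\Q}|=p$ forces $-p\equiv 1\pmod 4$, i.e.\ $p\equiv 3\pmod 4$. Hence if $p\equiv 1\pmod 4$ no admissible $\mathbb{E}$ exists, so $C_k(p^{n})$ is empty for all $k\ge 2$ and $n\ge 1$, which gives $|C_k(p^{n})|=0$.

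For part (2): when $p\equiv 3\pmod 4$ and $\mathbb{E}=\Q(\sqrt{-p})$ we have $|D_{\mathbb{E}/\Q}|=p$, and $p$ ramifies, say $p\,\mathcal{O}_{\mathbb{E}}=\mathfrak{p}^{2}$ with $N_{\mathbb{E}/\Q}(\mathfrak{p})=p$. The hypothesis $N_{\mathbb{E}/\Q}(\mathfrak{f})\cdot|D_{\mathbb{E}/\Q}|\mid p^{n}$ becomes $N_{\mathbb{E}/\Q}(\mathfrak{f})\mid p^{\,n-1}$; since the norm of any prime of $\mathbb{E}$ not above $p$ is prime to $p$, this forces $\mathfrak{f}$ to be supported only at $\mathfrak{p}$, i.e.\ $\mathfrak{f}=\mathfrak{p}^{a}$ with $p^{a}=N_{\mathbb{E}/\Q}(\mathfrak{f})\mid p^{\,n-1}$, so $a\le n-1$. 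Since $n-1\le 2\floor{n/2}$ for every $n\ge 1$, we get $a\le 2m$ with $m=\floor{n/2}$, and therefore $\mathfrak{f}=\mathfrak{p}^{a}$ divides $\mathfrak{p}^{2m}=p^{m}\mathcal{O}_{\mathbb{E}}$, as claimed.

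I do not expect a genuine obstacle here: once the conductor formula is in hand the argument is essentially bookkeeping. The two points that need a little care are (a) invoking the conductor--discriminant identity in precisely the form $M=|D_{\mathbb{E}/\Q}|\,N_{\mathbb{E}/\Q}(\mathrm{cond}\,\chi)$ (equivalently, computing the Artin conductor of $\Ind_{\mathbb{E}/\Q}\chi$), and (b) in part (1), not forgetting the cases $d\equiv 1,2\pmod 4$ where the discriminant acquires the extra factor $4$ — it is exactly this factor that excludes odd prime-power discriminants other than $p$ itself.
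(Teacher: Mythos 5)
Your proof is correct. The paper itself states this lemma without proof (citing \cite{ambi} and \cite{chandrasheel-sudipa}), and your argument --- the level formula $M=|D_{\mathbb{E}/\Q}|\cdot N_{\mathbb{E}/\Q}(\mathfrak f)$ for forms induced from Hecke characters (Iwaniec, Thm.~12.5, which the paper also invokes elsewhere), the parity constraint on imaginary quadratic discriminants for part (1), and the ramification bookkeeping $N_{\mathbb{E}/\Q}(\mathfrak f)\mid p^{n-1}$, $\mathfrak f=\mathfrak p^{a}$ with $a\le n-1\le 2\lfloor n/2\rfloor$ for part (2) --- is exactly the standard route taken in those references.
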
\smallskip

We now prove Thm. \ref{slight modification}. We need to only consider the case $p \equiv 3 \mod 4$ case in view of Lem. \ref{auxiliary}.

\begin{proof} 

Let, for an ideal $ \mathfrak{J} \subset \mathcal{O}_{\mathbb{E}}$, $h_{\mathbb{E}}^0(\mathfrak{J})$  and $h_{\mathbb{E}}'(\mathfrak{J})$ denote the number of Hecke characters with conductor exactly $\mathfrak{J}$ and the number of characters of the narrow ray class group modulo $\mathfrak{J}$ respectively. Hence $\sum \limits_{\mathfrak f~|~\mathfrak{J}} h_{\mathbb{E}}^0(\mathfrak f)=h_{\mathbb{E}}'(\mathfrak{J})$. Using (Theorem 12.5, \cite{iwaniec})  we have,
$$|C_k(p^n)| \leq \sum \limits_{\substack{\mathbb{E}=\mathbb{Q}(\sqrt{-p}), \\  N_{\mathbb{E}/\mathbb{Q}}(\mathfrak f) \cdot|D_{\mathbb{E}/\mathbb{Q}}~||~p^n}} h_{\mathbb{E}}^0(\mathfrak f) .$$

By part (2) of Lem.~\ref{auxiliary}, 
$$|C_k(p^n)|\leq \sum \limits_{\substack{\mathfrak f~|~p^m\mathcal{O}_{\mathbb{E}} \\ \mathbb{E}=\mathbb{Q}(\sqrt{-p}) }} h_{\mathbb{E}}^0(\mathfrak f)= h'_{\mathbb{E}}(p^m\mathcal{O}_{\mathbb{E}}) \leq  h_{\mathbb{E}}\Phi_{\mathbb{E}}(p^m\mathcal{O}_{\mathbb{E}}),$$ 
where $h_{\mathbb{E}}=$ the class number of $\mathbb{E}$ and $\Phi_{\mathbb{E}}(\mathcal{I})= N_{\mathbb{E}/\mathbb{Q}}(\mathcal{I})\prod \limits_{\substack{\mathfrak{p}|\mathcal{I}, \\ 
\mathfrak{p} \text{ prime }}} (1-N_{\mathbb{E}/\mathbb{Q}}(\mathfrak{p})^{-1})$ with $\Phi_{\mathbb{E}}(\mathcal{O}_{\mathbb{E}})=1$. \smallskip

The last inequality follows from \cite[Theorem 3.25(i)]{narkiewicz}. Hence,
$$ |C_k(p^n)| \leq h_{\mathbb{E}} \cdot p^{2m}.$$

We use the following asymptotic formula for the class number as in \cite[Corollary 1 of Theorem 8.14]{narkiewicz}.
\[
\begin{split}
& \quad {\log} ~h_{\Q({\sqrt{-d}})} = \left( \frac{1}{2}+o(1) \right) \log |d|  \quad\text{ as } ~d \longrightarrow \infty \\
\implies & \quad  h_{\Q({\sqrt{-d}})} \ll_{\epsilon} d^{{\frac{1}{2}}+\epsilon} \quad \text{ as } \epsilon \longrightarrow 0, ~d \longrightarrow \infty\\
\implies & \quad |C_k(p^n)| \ll_{\epsilon} p^{2m+\frac{1}{2}+1} \text{ as } \epsilon \longrightarrow 0, ~p \longrightarrow \infty \\
\implies & \quad |C_k(p^n)| \ll_{\epsilon} p^{n+\frac{1}{2}+\epsilon} \text{ as } \epsilon \longrightarrow 0, ~p \longrightarrow \infty.
\end{split}
\]
\end{proof}

 \subsection{The Level Structure :}  \label{level-structure}
 Consider the algebraic group ${\rm GL}_m$. We very briefly recall the definition of a level structure for ${\rm GL}_m(\mathbb{A}_\Q)$. For a finite prime $p$ of $\Q$, let $\Z_p$ be the ring of $p$-adic integers. For each integer $n \geq 0$, define
 $$ K_p^m(n):= \{ x=(x_{i,j})_{m\times m} \in {\rm GL}_m(\mathbb{Z}_p) : x_{m,k}\in p^n\mathbb{Z}_p, ~\text{ } 1\leq k<m, ~x_{m,m}-1 \in p^n\Z_p \}.$$
 
 Let $\mathbb{A}_f$ denote the finite part of ad{\`e}les over $\Q$. Let $N$ be a positive integer with prime factorisation $N =  \prod \limits_{1 \leq i \leq r, p_i | N}p_i^{n_i}$. Define a compact open subgroup $K_f^m(N) = \prod \limits_p K_p$ of ${\rm GL}_m(\mathbb{A}_f)$ where 
 $$K_p =
 \begin{cases} 
 & K_{p_i}^m{(n_i)} \quad \text{if} ~ p \mid N~\text{i.e.},~ p = p_i  \\
 & {\rm GL}_m(\mathbb{Z}_p) \quad \text{if}~ p \nmid N
 \end{cases}.
 $$
 
 For each $N \ge 1$, the compact open subgroup $K_f^m(N) \subseteq {\rm GL}_m (\mathbb A_f)$ is called the level structure corresponding to $N$.
\smallskip

\section{Automorphic Tensor product}\label{Automorphic Tensor product}
	
\subsection{Definition} We recall the definition of automorphic tensor product transfer from  $\rm{GL}_2 \times \rm{GL}_2 $ to $\rm{GL}_4$ over a number field $\F$. Let $\pi_1 = \bigotimes \limits_{v} \pi_{1,v}$ and $\pi_2 = \bigotimes \limits_{v} \pi_{2,v}$ be two cuspidal automorphic representations of $\rm{GL}_2(\A_\F)$. By local Langlands correspondence for each place $v$, the local  constituents $\pi_{1,v}$ and $\pi_{2,v}$ correspond to two dimensional representations of the local Weil-Deligne group. 

D. Ramakrishnan in \cite[Thm. M]{ramakrishnan} proved the existence of an automorphic representation (automorphic tensor product) $\pi_1 \boxtimes \pi_2$ of  $\rm{GL}_4(\A_\F)$ that satisfies:
\begin{itemize}
\item Equality of local $L$-functions: $L(s,(\pi_1 \boxtimes \pi_2)_v) = L(s, \pi_{1,v} \times \pi_{2,v})$  for all places $v$, \smallskip
\item Equality of local $\epsilon$-factors: $ \varepsilon(s,(\pi_1 \boxtimes \pi_2)_v) = \varepsilon(s, \pi_{1,v }\times \pi_{2,v})$ for all finite places $v$.
\end{itemize}
\smallskip

\subsection{The cuspidality criterion for automorphic tensor product}\label{cuspidality}

We describe the cuspidality criterion for the automorphic tensor product (proved by D. Ramakrishnan in  \cite{ramakrishnan-cuspidality}) (we need it only for the case of base field = $\Q$).\smallskip

\begin{thm}\label{cuspidality criterion}
	Let $\pi_1, ~\pi_2$ be two cuspidal automorphic representations of $\GL_2(\mathbb{A}_{\Q})$. Then $\pi_1 \boxtimes\pi_2$ is an isobaric automorphic representation of $\GL_4(\mathbb{A}_{\Q})$. Further more $\pi_1 \boxtimes\pi_2$ is cuspidal under the following conditions.\smallskip
	
\begin{enumerate}
	
	\item  If exactly one of them is dihedral (i.e. automorphically induced from imaginary quadratic extensions of $\Q$), then $\pi_1 \boxtimes\pi_2$ is cuspidal.\smallskip
	
	\item If both of them are non-dihedral, then $\pi_1 \boxtimes\pi_2$ is cuspidal if and only if $\pi_2$ is not of the form $\pi_1 \otimes \chi$ for any id\'ele class character $\chi$ of $\Q$.\smallskip
	
	\item Suppose $\pi_1,~\pi_2$ are both dihedral, say
	$\pi_i = I^{\Q}_{\K_{i}}$ for $i = 1, ~ 2$, where $\K_i$'s are quadratic extensions of $\Q$ and $\mu_1$  (resp., $\mu_2$) is an id\'ele class character of $\K_1$ (resp., $\mathbb{K}_2$). Then $\pi_1 \boxtimes \pi_2$ is cuspidal if and only if $\K_1 \neq \K_2$.\smallskip
	\end{enumerate}
\end{thm}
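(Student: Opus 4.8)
\medskip
\noindent\textbf{Strategy for a proof.} The plan is to convert the cuspidality question into a statement about the order of the pole at $s=1$ of a Rankin--Selberg $L$-function. Recall the standard consequence of the isobaric classification: an isobaric automorphic representation $\Pi$ of $\GL_n(\A_\Q)$ is cuspidal if and only if $L(s,\Pi\times\widetilde\Pi)$ has a \emph{simple} pole at $s=1$, and otherwise that pole has order at least $2$. So I would first invoke Ramakrishnan's construction (\cite[Thm.~M]{ramakrishnan}, recalled above) to record that $\Pi:=\pi_1\boxtimes\pi_2$ is isobaric, has standard $L$-function $L(s,\pi_1\times\pi_2)$, and has local Langlands parameter $\rho_{1,v}\otimes\rho_{2,v}$ at every place $v$, where $\rho_{i,v}$ denotes the parameter of $\pi_{i,v}$.

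\textbf{Factorisation of $L(s,\Pi\times\widetilde\Pi)$.} The parameter of $\Pi_v\otimes\widetilde\Pi_v$ is then $(\rho_{1,v}\otimes\rho_{1,v}^{\vee})\otimes(\rho_{2,v}\otimes\rho_{2,v}^{\vee})$, which by $\rho_{i,v}\otimes\rho_{i,v}^{\vee}\cong\mathbf 1\oplus\mathrm{ad}\,\rho_{i,v}$ decomposes as $\mathbf 1\oplus\mathrm{ad}\,\rho_{1,v}\oplus\mathrm{ad}\,\rho_{2,v}\oplus(\mathrm{ad}\,\rho_{1,v}\otimes\mathrm{ad}\,\rho_{2,v})$. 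Since the adjoint lift $\mathrm{Ad}(\pi_i)$ (namely $\mathrm{Sym}^2(\pi_i)$ twisted by $\omega_{\pi_i}^{-1}$) is an isobaric automorphic representation of $\GL_3(\A_\Q)$ by Gelbart--Jacquet, and invoking Rankin--Selberg theory for $\GL_3\times\GL_3$, one obtains
\[
L(s,\Pi\times\widetilde\Pi)=\zeta(s)\,L(s,\mathrm{Ad}(\pi_1))\,L(s,\mathrm{Ad}(\pi_2))\,L(s,\mathrm{Ad}(\pi_1)\times\mathrm{Ad}(\pi_2)).
\]
Because $\pi_i$ is cuspidal, $\mathbf 1$ is not a constituent of $\mathrm{Ad}(\pi_i)$, so $L(s,\mathrm{Ad}(\pi_i))$ is holomorphic at $s=1$; and comparing the simple poles on the two sides of $L(s,\pi_i\times\widetilde\pi_i)=\zeta(s)L(s,\mathrm{Ad}(\pi_i))$ shows $L(1,\mathrm{Ad}(\pi_i))\neq 0$. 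Hence $\mathrm{ord}_{s=1}L(s,\Pi\times\widetilde\Pi)=1+\mathrm{ord}_{s=1}L(s,\mathrm{Ad}(\pi_1)\times\mathrm{Ad}(\pi_2))$, and, $\mathrm{Ad}(\pi_i)$ being self-dual, this last order equals the number of cuspidal constituents that $\mathrm{Ad}(\pi_1)$ and $\mathrm{Ad}(\pi_2)$ have in common. Therefore $\Pi$ is cuspidal if and only if $\mathrm{Ad}(\pi_1)$ and $\mathrm{Ad}(\pi_2)$ share no constituent.

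\textbf{Reading off the three cases.} It remains to describe the constituents of $\mathrm{Ad}(\pi_i)$. If $\pi_i$ is non-dihedral, $\mathrm{Ad}(\pi_i)$ is cuspidal on $\GL_3$. If $\pi_i=I^{\Q}_{\K}(\mu)$ is dihedral then $\pi_i\otimes\eta_{\K}\cong\pi_i$, where $\eta_{\K}$ is the quadratic character cutting out $\K$, so $\eta_{\K}$ occurs in $\mathrm{Ad}(\pi_i)$, and in fact $\mathrm{Ad}(\pi_i)=\eta_{\K}\boxplus I^{\Q}_{\K}(\mu/\mu^{\tau})$ with $\tau$ the nontrivial element of $\mathrm{Gal}(\K/\Q)$, the second summand being a cuspidal representation of $\GL_2$ of central character $\eta_{\K}$ (outside a degenerate case, where it splits into quadratic characters, handled below). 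Case (1): a cuspidal $\GL_3$-representation shares no constituent with an isobaric sum of a character and a $\GL_2$-representation, so $\Pi$ is cuspidal. Case (2): two cuspidal $\GL_3$-representations share a constituent iff they are isomorphic, and $\mathrm{Ad}(\pi_1)\cong\mathrm{Ad}(\pi_2)$ iff $\pi_2\cong\pi_1\otimes\chi$ for an id\`ele class character $\chi$ --- a short lemma, again proved from the pole of a Rankin--Selberg $L$-function --- which is exactly the stated condition. Case (3): if $\K_1=\K_2$ then $\eta_{\K_1}=\eta_{\K_2}$ is a common constituent, so $\Pi$ is not cuspidal (equivalently $I^{\Q}_{\K}(\mu_1)\boxtimes I^{\Q}_{\K}(\mu_2)\cong I^{\Q}_{\K}(\mu_1\mu_2)\boxplus I^{\Q}_{\K}(\mu_1\mu_2^{\tau})$ by Mackey theory); if $\K_1\neq\K_2$, then a common constituent would be either $\eta_{\K_1}=\eta_{\K_2}$, which is impossible, or a common $\GL_2$-constituent, which by comparing central characters again forces $\eta_{\K_1}=\eta_{\K_2}$ --- so none exists and $\Pi$ is cuspidal (indeed $\Pi$ is then automorphically induced from the biquadratic field $\K_1\K_2$ by a regular character).

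\textbf{Where the difficulty lies.} Almost all of the content is carried by input theorems I would quote rather than reprove: the construction and local compatibility of $\pi_1\boxtimes\pi_2$ (\cite{ramakrishnan}), the Gelbart--Jacquet adjoint/$\mathrm{Sym}^2$ lift, and the precise pole, holomorphy and non-vanishing behaviour of Rankin--Selberg $L$-functions at $s=1$ furnished by the isobaric classification (Jacquet--Shalika, Shahidi). Granting these, the one genuinely delicate point inside the argument is the bookkeeping in the dihedral cases --- in particular making sure that ``$\K_1\neq\K_2$'' is the right condition, i.e.\ that it really encodes ``$\pi_1$ and $\pi_2$ are not both automorphically induced from one and the same quadratic field''. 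This needs a little care for forms carrying several self-twists, where $I^{\Q}_{\K}(\mu/\mu^{\tau})$ degenerates into characters and $\K$ is not canonically attached to $\pi_i$; for the cohomological representations this paper actually applies the criterion to, each dihedral $\pi_i$ is induced from a \emph{unique} imaginary quadratic field and this subtlety disappears. Everything else is formal manipulation of $L$-functions.
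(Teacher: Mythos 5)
The first thing to note is that the paper does not prove this statement at all: Theorem~\ref{cuspidality criterion} is quoted as Ramakrishnan's cuspidality criterion, with \cite{ramakrishnan} and \cite{ramakrishnan-cuspidality} cited in place of a proof, so there is no in-paper argument to measure yours against. What you have written is a reconstruction of essentially the standard (and essentially Ramakrishnan's) route: detect cuspidality of $\Pi=\pi_1\boxtimes\pi_2$ through the order of the pole of $L(s,\Pi\times\widetilde\Pi)$ at $s=1$, use the local parameters $\rho_{1,v}\otimes\rho_{2,v}$ and the Gelbart--Jacquet lift to factor this as $\zeta(s)\,L(s,\mathrm{Ad}(\pi_1))\,L(s,\mathrm{Ad}(\pi_2))\,L(s,\mathrm{Ad}(\pi_1)\times\mathrm{Ad}(\pi_2))$, and reduce the criterion to whether $\mathrm{Ad}(\pi_1)$ and $\mathrm{Ad}(\pi_2)$ share an isobaric constituent. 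That reduction, the holomorphy and non-vanishing of $L(s,\mathrm{Ad}(\pi_i))$ at $s=1$, and your reading of cases (1) and (3) (including the central-character observation that $I^{\Q}_{\K}(\mu/\mu^{\tau})$ has central character $\eta_{\K}$) are sound.

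Two caveats. First, in case (2) the step you dismiss as ``a short lemma'' --- that $\mathrm{Ad}(\pi_1)\cong\mathrm{Ad}(\pi_2)$ forces $\pi_2\cong\pi_1\otimes\chi$ --- is not a routine pole computation: it is precisely the ``multiplicity one for $\SL(2)$'' theorem, one of the two main results of the cited paper \cite{ramakrishnan}. Its proof has to exclude the possibility that $\pi_1\boxtimes\widetilde{\pi_2}$ decomposes as an isobaric sum of two cuspidal $\GL_2$ representations with no $\GL_1$ constituent, and that is where the real work lies; a naive pole argument at this point risks circularity with the cuspidality criterion itself. You should quote that theorem explicitly rather than gesture at it. Second, in case (3) the condition $\K_1\neq\K_2$ must be read as ``there is no quadratic field from which both $\pi_1$ and $\pi_2$ are automorphically induced,'' since a dihedral form with several self-twists has no canonically attached inducing field; you flag this yourself, and, as you note, it is harmless for the representations this paper actually feeds into the criterion, because a dihedral form attached to a cusp form of weight $\geq 2$ has a unique (imaginary quadratic) inducing field. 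With the citation in case (2) made explicit, your outline is a correct account of how the quoted theorem is proved.
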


\smallskip

\subsection{Conductors for automorphic tensor products}\label{conductors}
 
Conductor of an automorphic  representation: We briefly recall the notion of the conductor of an automorphic representation here.\smallskip

Let $(\rho,\mathcal H)=(\bigotimes \limits_{p\leq \infty}\rho_p, \bigotimes \limits_{p\leq \infty}\mathcal H_p)$ be an irreducible automorphic representation of ${\rm GL}_m(\mathbb{A}_\Q)$. 
\smallskip

\begin{defn} For each finite prime $p$, the conductor of $\rho_p$ is defined to be  the smallest  integer $c(\rho_p)\geq 0$ such that the space $\mathcal{H}_p^{K_p^m({{c(\rho_p)}})}$ consisting of all $K_p^m({{c(\rho_p)}})$-fixed vectors of $\mathcal H_p$ is non-zero. Thus $c(\rho_p) = 0$ for almost all primes. The conductor of $\rho$ is defined as 
	$$N_\rho= \prod_{p<\infty} p^{c(\rho_p)} .$$ 
\end{defn}\smallskip
	
\noindent{\bf Conductor of a local character:} Let $\mathcal{K}$ be a non-archimedean local field of characteristic zero. Let $\mathcal{O}_{\mathcal{K}}$, $\mathfrak{p}$, $\kappa$ be its ring of integers, the maximal ideal and the residue field respectively. \smallskip
		
\begin{defn}	
The conductor $c(\zeta)$ of a character $\zeta$ of $\mathcal{K}^\times$ is the smallest positive integer $n$ such that ${\zeta}_{|_{1+\mathfrak{p}^n}} = 1 $. 
\end{defn}
\smallskip
	
\noindent{\bf A relation between level and conductor:} For a supercuspidal representation $\pi$ of $\GL_n(\Q_p)$, if $c(\pi)$ is the conductor of $\pi$, we have $c(\pi)= n \left(1+\dfrac{m}{e} \right)$, where $m$ is the level of $\pi$ and $e$ is the ramification index (see \cite[p.149]{anandavardhanan}). We will consider only the case when $n=2$. Note that, $c(\pi) \geq 2$.\medskip
	
Since, $1\leq e\leq2$ , we have 
\begin{equation}\label{conductor-inequality}
\begin{split} & \quad  \dfrac{m}{2}\leq \dfrac{m}{e}\leq m \\
\implies & \quad \frac{m}{2} \leq \dfrac{c(\pi)}{2}-1 \leq m \\
\implies& \quad m+2 \leq c(\pi) \leq 2m+2.
\end{split}
\end{equation}	
	
\begin{lem}\label{conductor relation}
Let $p$ be an odd prime and $\pi_1, ~\pi_2$ be two supercuspidal representations of $\rm{GL}_2(\Q_p)$. Let $c(\pi_1),~c(\pi_2) \text{ and } c(\pi_1 \times \pi_2)$ be the conductors of $\pi_1, ~\pi_2 \text{ and } \pi_1 \times \pi_2$ respectively. Then we have,
$$ 2 \leq c(\pi_1 \times \pi_2) \leq 4c(\pi_1)+4c(\pi_2)-12.$$ 
\end{lem}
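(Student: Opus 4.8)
The plan is to bound the conductor $c(\pi_1 \times \pi_2)$ of the Rankin--Selberg product of two supercuspidal representations of $\GL_2(\Q_p)$ purely in terms of the individual conductors $c(\pi_1)$ and $c(\pi_2)$, using the relation between conductor, level, and ramification index recalled just before the statement. First I would pass from the representation-theoretic side to the Galois side: since $p$ is odd and $\pi_i$ is supercuspidal, by the local Langlands correspondence $\pi_i$ corresponds to an irreducible $2$-dimensional representation $\sigma_i$ of the Weil--Deligne group of $\Q_p$, and such a $\sigma_i$ is necessarily induced, $\sigma_i = \mathrm{Ind}_{W_{E_i}}^{W_{\Q_p}} \chi_i$, from a character $\chi_i$ of a quadratic extension $E_i/\Q_p$ (this is the classical description of $2$-dimensional irreducibles in residue characteristic $\ne 2$). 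The Rankin--Selberg $L$- and $\epsilon$-factors match the Galois-theoretic ones for the tensor product $\sigma_1 \otimes \sigma_2$, so $c(\pi_1 \times \pi_2)$ equals the Artin conductor $a(\sigma_1 \otimes \sigma_2)$.

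Next I would estimate $a(\sigma_1 \otimes \sigma_2)$. Using the conductor--discriminant formula and the inductivity of the conductor, one can express $a(\sigma_1 \otimes \sigma_2)$ in terms of the conductors of characters of the compositum or of $E_1, E_2$; more crudely, one has the general bound $a(\sigma_1 \otimes \sigma_2) \le \dim(\sigma_2)\, a(\sigma_1) + \dim(\sigma_1)\, a(\sigma_2) + (\text{a term measuring wild ramification})$, and for tamely ramified pieces the mixed term vanishes. The cleanest route for the explicit numerical bound $4c(\pi_1) + 4c(\pi_2) - 12$ is probably to translate everything into the level $m_i$ and ramification index $e_i \in \{1,2\}$ of $\pi_i$ via $c(\pi_i) = 2(1 + m_i/e_i)$, bound $a(\sigma_1 \otimes \sigma_2)$ by a multiple of $\max(m_1, m_2)$ or $m_1 + m_2$ (Swan-conductor bookkeeping on the biquadratic field $E_1 E_2$), and then feed in the inequalities $m_i + 2 \le c(\pi_i) \le 2m_i + 2$ from \eqref{conductor-inequality}. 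The lower bound $c(\pi_1 \times \pi_2) \ge 2$ is essentially automatic: $\sigma_1 \otimes \sigma_2$ is $4$-dimensional and ramified (a product of supercuspidals of $\GL_2(\Q_p)$ cannot be unramified), and the conductor of any ramified representation of $\GL_n(\Q_p)$ is at least $\dots$ well, at least $1$; to get $2$ one notes more care is needed, but in fact one only needs some absolute lower bound and $2$ follows from the same level-conductor dictionary applied componentwise.

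The main obstacle I expect is getting the constants right in the upper bound: the naive inductivity/tensor bounds for Artin conductors carry wild-ramification cross-terms that are genuinely present when $E_1 = E_2$ or when the $\chi_i$ are deeply ramified, and one has to check that in the worst case (both $e_i = 2$, so $m_i = c(\pi_i)/2 - 1$, and $E_1 = E_2$) the quantity $a(\sigma_1 \otimes \sigma_2)$ — which then decomposes as a sum of conductors of four characters of $E_1$, namely $\chi_1\chi_2$, $\chi_1\chi_2^{\tau}$, and their conjugates — is still dominated by $4c(\pi_1) + 4c(\pi_2) - 12$. This is a finite case analysis over the two values of each $e_i$ and whether $E_1 = E_2$, so it is routine but needs to be done carefully; I would organize it as a short table. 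The other, more pedestrian, point to be careful about is that $\pi_1 \times \pi_2$ here means the $\GL_4$-Rankin--Selberg object whose conductor is defined via fixed vectors under $K_p^4(n)$ as in the Definition above, so one should confirm that this notion of conductor coincides with the Artin conductor of $\sigma_1 \otimes \sigma_2$ — which is exactly what Ramakrishnan's matching of local $L$- and $\epsilon$-factors guarantees, via the standard conversion between $\epsilon$-factor exponents and conductor exponents.
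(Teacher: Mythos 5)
Your write-up is a plan rather than a proof: the one step that carries all the content of the lemma --- verifying that the explicit upper bound $4c(\pi_1)+4c(\pi_2)-12$ actually holds --- is deferred to ``a short table'' / ``finite case analysis'' that you never perform. On your Galois-side route (writing $\sigma_i=\mathrm{Ind}_{E_i}^{\Q_p}\chi_i$, which is legitimate for odd $p$), the whole point would be to run the conductor--discriminant/inductivity computation through the cases $E_i$ unramified or ramified, $E_1=E_2$ or not, bound $a(\chi_1\chi_2)$, $a(\chi_1\chi_2^{\tau})$ in terms of $a(\chi_1),a(\chi_2)$, and then compare the result numerically with $4c(\pi_1)+4c(\pi_2)-12$ using $c(\pi_i)=d_{E_i}+f_{E_i}a(\chi_i)$; none of this is done, so the stated inequality is not established. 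The same applies to the lower bound: you explicitly leave it open (``more care is needed''), and it is not automatic --- e.g.\ when $\pi_2\cong\check{\pi}_1$ the tensor product contains the trivial character as a summand, and one must argue that the remaining (adjoint-type) summand still forces conductor at least $2$. As written, then, the proposal identifies a plausible strategy but has a genuine gap at exactly the quantitative step the lemma asserts.

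For comparison, the paper does not pass to induced characters at all: it invokes the explicit conductor formula of Bushnell--Henniart--Kutzko for local Rankin--Selberg convolutions (their Theorem 6.5) directly, together with the level--conductor relation $c(\pi)=2\left(1+\frac{m}{e}\right)$, the resulting inequalities $m+2\le c(\pi)\le 2m+2$, and the fact that $c(\pi)=c(\check{\pi})$, taking $e=\lcm(e_1,e_2)$ and $m=\max\{em_1/e_1,\,em_2/e_2\}$. That route replaces your case analysis over $E_1,E_2$ by a single citation of an explicit formula, which is why the paper's proof is short; if you want to complete your dihedral-induction argument instead, you must actually carry out the wild/tame bookkeeping over the quadratic extensions (including the $E_1=E_2$ case you flag as the worst one) and check the constants, since the cross-terms there are exactly where a naive bound could exceed $4c(\pi_1)+4c(\pi_2)-12$.
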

	
\begin{proof}
Using  \cite[Thm. 6.5 ]{bushnell} and the calculation before the Lem.~\ref{conductor relation}, we get the desired inequality.\smallskip
	
Take $e= {\rm l.c.m.}(e_1,e_2)$, and $m_1$, $m_2$ be the levels of $\pi_1$ and $\pi_2$, respectively. Let $m$ be defined by $$m = \max \left\{ \frac{e m_1}{e_1},\frac{em_2}{e_2} \right\}.$$
	
As in the notation in \cite{bushnell}, we have
$$n_1=n_2=2, \quad 1\leq e_1,e_2 \leq 2,$$
$$1\leq d\leq 2, \quad  0\leq c(\beta),c(\gamma), \quad l \leq m.$$
	
Let $\check{\pi_1}$ denote the contragredient representation of $\pi$. Using \cite[Thm. 6.5 ]{bushnell}, the inequality \eqref{conductor-inequality}, and the fact $c(\pi)=c(\check{\pi})$  (see \cite[Prop., p.23]{bushnell1}), we conclude that
$$2 \leq c(\pi_1 \times \pi_2) \leq 4c(\pi_1)+4c(\pi_2)-12.$$
\end{proof}

\section{Cohomology of the $\GL_m$}\label{cohomology}

\subsection{Cohomological representations of ${\GL}_m(\A_\Q)$ }\label{Coh-rep}

Let $\pi$ be a cuspidal automorphic representation of ${\GL}_m(\A_\Q)$ with finite and infinite parts denoted by $\pi_f $ and $\pi_\infty$ respectively. Let $\mu$ be a dominant integral weight corresponding to the standard Borel subgroup of ${\GL}_m$ and $\mathcal M_{\mu,\mathbb{C}}$ be the underlying vector space of the finite dimensional irreducible highest weight representation corresponding to $\mu$. For a given level structure $K_f^m \subset  {\rm GL}_m(\mathbb{A}_f)$, let $\pi^{K^m_f}_f$ denote the space of $K_f^m$-fixed vectors of $\pi_f$.\smallskip

 Define  $ {\rm Coh}({\GL}_m, \mu, K_f^m)$ = the set of all automorphic representations $\pi$  such that its relative Lie algebra cohomology with coefficients in $\mathcal M_{\mu,\mathbb{C}}$ is non zero in some degree i.e.,
	$$ H^*(\mathfrak{gl}_m, \mathbb{R}_+^\times \cdot {\rm SO}_m(\mathbb{R}),\pi_\infty \bigotimes \mathcal M_{\mu,\mathbb{C}})\bigotimes \pi_f^{K_f^m} \neq 0 .$$

We say that $\pi$ is cohomological if $\pi \in {\rm Coh}({\GL}_m, \mu, K_f^m)$ for some highest weight $\mu$ and level structure $K_f^m$. Now we state a necessary condition for $\pi$ to be cohomological. Let $\mathcal{L}_0^+(\GL_m)$ be the set of all pairs $( w, \ell)$ with $\ell = (\ell_1,\ell_2, \dots , \ell_m)\in \Z^m$ such that $\ell_1 > \ell_2 > \dots > \ell_m >0$  and $\ell_i=-\ell_{m-i+1}$ and $w\in \Z$ such that for all $i$, 
\[
    w+\ell_i \equiv \begin{cases}
    1 \text{ if } m \text{ is even }\\
    0 \text{ if } m \text{ is odd }
    \end{cases}.
\]
For $(w,\ell) \in \mathcal{L}_0^+(\GL_m)$, let $D_{\ell_i} $ be the discrete series representation of $\rm{GL}_2(\R)$ of lowest weight $\ell_i+1$, $1 \leq i \leq m$. Define,	
\[
J(w,\ell)= \begin{cases}
{\rm Ind}_{P_{2,2,\dots,2}}^{{\rm GL}_{m}}
\left(
(D_{\ell_1}\otimes|.|^{w/2}_{\R})\otimes \dots \otimes(D_{\ell_{m/2}} \otimes |.|^{w/2}_{\R})
\right)
 & \text{ if } m \text{ is even} \\
{\rm Ind}_{P_{2,2,\dots,2,1}}^{{\rm GL}_m}
\left(
(D_{\ell_1}\otimes|.|^{w/2}_{\R})\otimes \dots \otimes(D_{\ell_{(m-1)/2}}\otimes|.|^{w/2}_{\R}) \otimes |.|^{w/2}_{\R}
\right)
 & \text{ if } m \text{ is odd}
\end{cases}.
\]

If $\pi$ is cuspidal cohomological representation of $\GL_m(\mathbb{A}_{\Q})$, then the representation $\pi_{\infty}$ must be equal to $J(w,\ell)$ for some $(w,\ell) \in \mathcal{L}_0^+(\GL_m)$. See (\cite{raghuram-shahidi}, section 5) for more details.\smallskip

\subsection{Examples}\label{Examples} We are interested in studying the contribution to the $\GL_4$-cohomology of the representations that are obtained via automorphic tensor products of two cusp forms on $\GL_2$. For that, consider the following examples. \smallskip

\begin{ex} (see \cite{raghuram-shahidi}) Let $k_1 > k_2 \geq 2$ be positive integers. Let  $\phi_1 \text{ and } \phi_2$ be two cusp forms of weight $k_1,k_2$ respectively and $\pi_{\phi_{1}}$ and $\pi_{\phi_{2}}$ be the corresponding automorphic cuspidal representations of $\GL_2(\A_\Q)$. Let $\pi: = \pi_{\phi_{1}} \boxtimes \pi_{\phi_{2}}$ denote their automorphic tensor product.  $\pi_\infty$ = the representation at infinity as a representation of $\GL_4(\R)$, can be computed using a well-known recipe for Langlands parameters as
 $$ \pi_{\infty}= {\rm{Ind}}_{P_{2,2}}^{\GL_4} (D_{k_1+k_2-2} \otimes D_{k_1-k_2} ).$$
 
 We have to consider two cases based on the parity of $k_1 +k _2$.\smallskip
 
 {\bf Case (1): } Suppose $k_1 + k_2 $ is odd.  In this case we see that  $\pi_{\infty} = J(0,\ell), $ where: 
 $$ \ell= (k_1 + k_2-2,~ k_1-k_2,~ k_2-k_1, ~-k_1-k_2+2).$$

   Hence $\pi$ is cohomological with respect to the dominant integral weight
    $$\mu_{\pi} = \nu_{k_1,k_2}= \left( \frac{k_1+k_2-5}{2}, ~\frac{k_1-k_2-1}{2}, ~\frac{k_2-k_1+1}{2},~ \frac{-k_1-k_2+5}{2}\right).$$\smallskip
   
    {\bf Case (2): } Suppose $k_1 + k_2 $ is even.  In this case we see that $\pi$ is not cohomological but its half-integral Tate-twist $\pi \otimes | \cdot |^{1/2}$ is cohomological. Moreover,
    $(\pi \otimes | \cdot |^{1/2})_{\infty} = J(1,\ell), $ where 
 $$ \ell= (k_1 + k_2-2,~ k_1-k_2,~ k_2-k_1, ~-k_1-k_2+2).$$
 
   Hence $\pi$ is cohomological with respect to the dominant integral weight $$\mu_{\pi} = \left( \frac{k_1+k_2 - 4}{2}, ~\frac{k_1-k_2}{2}, ~\frac{k_2-k_1+2}{2},~ \frac{-(k_1+k_2)+ 6}{2} \right).$$
   
   \end{ex}\smallskip

\begin{ex} The following example from (\cite{raghuram-shahidi}, section 5) shows that we must consider two cusp forms of different weight for their automorphic tensor product to be cohomological (up to a half-integral Tate-twist).\smallskip

Let $\phi_1,\phi_2$ are two cusp forms of weight $k \in \Z, k$ even such that $\pi=\pi_{\phi_1}\boxtimes \pi_{\phi_2}$ is cuspidal. Then the similar computation involving the Langlands parameters gives that the infinite part of $\pi$ is given by 
\[ \pi_{\infty}= {\rm Ind}_{P_{2,1,1}}^{\GL_4} (D_{2(k-1)} \otimes sgn \otimes 1).
\]

From the shape of $\pi_{\infty}$, it follows that neither $\pi_\infty$ nor $(\pi \otimes | \cdot |^{1/2})_{\infty}$ are of the type $J(w, \ell)$ for any $ (w,\ell) \in \mathcal{L}_0^+(\GL_m)$ and hence neither $\pi$ nor  $\pi \otimes | \cdot |^{1/2}$ are  cohomological.
\end{ex}
\smallskip

\section{An estimate of the contribution of automorphic tensor products to cohomology of the $\GL_4$}\label{statement of main theorem}

\subsection{Notations}\label{Notations}

We develop a few required notations for the statement of the main theorem in this subsection.\medskip

\begin{itemize}
\item Let $k_1 > k_2  \geq 2$ be two positive integers. Suppose they are  of different parity. Without loss of generality, assume that $k_1$ is odd and $k_2$ is even. Define $ \mathcal T_{k_1, k_2}(N)$ to be the set of all cuspidal automorphic representations of $\GL_4 (\mathbb{A}_{\Q})$ corresponding to the level structure $K_f^4(N)$ that are obtained by automorphic tensor product of two cuspidal automorphic representations of $\GL_2(\mathbb{A}_{\Q})$ with highest weights $\left(\dfrac{k_1-1}{2},~\dfrac{1-k_1}{2}\right)$ and $\left(\dfrac{k_2}{2}-1,~1-\dfrac{k_2}{2}\right)$, respectively. (If $k_1,~ k_2$ are of same parity, we consider the half-integral twist of the automorphic tensor product of two representations $\pi_1, \pi_2$ of $\GL_2(\mathbb{A}_{\Q})$ with suitable highest weight depending on $k_1,k_2$ are even or odd).
\smallskip

\item Let $C^0_{k_1,~k_2}(N_1,N_2)$ denote the set of ordered pairs $(f_1,f_2)$ of all normalized cusp eigenforms $f_1,f_2$ of Hecke operators of $\Gamma_1(N_1)$ of weight $k_1$ and $\Gamma_1(N_2)$ of weight $k_2$ respectively such that both $f_1$ and $f_2$ are dihedral, and are obtained by automorphic induction from Hecke characters of (same) imaginary quadratic extension of $\Q$.
\smallskip

Note that, $C^0_{k_1,~k_2}(N_1,N_2) \subseteq C_{k_1}(N_1) \times C_{k_2}(N_2)$ (see the Sec.~\ref{an-upper-bound-for-eigenforms} for notations). Hence, if $N_1=p^{n_1} \text{ and } N_2=p^{n_2}$, using Thm.~\ref{slight modification} we have,
$$ |C^0_{k_1,~k_2}(p^{n_1},p^{n_2})| \ll_{\epsilon} p^{n_1+n_2+2+2\epsilon} \text{ as } p \longrightarrow \infty.$$

 \item Let $C'_{k_1,~k_2}(N_1,N_2)$ denote the set of ordered pairs $(g_1,g_2)$ of normalized cusp eigenforms $g_1, ~g_2$ of Hecke operators of $\Gamma_1(N_1)$ of weight $k_1$ and $\Gamma_1(N_2)$ of weight $k_2$ respectively such that $g_1, ~g_2$ are both non-dihedral and one of them is the twist of the other by some id\'ele class character of $\Q$.\smallskip

Note that under the assumption that $k_1 > k_2,$ the set $C'_{k_1,~k_2}(N_1,N_2)$ is indeed an empty set.\smallskip
\end{itemize}

\subsection{The main theorem}

\begin{thm}\label{main theorem}
	Let $ k_1 > k_2 \geq 2$ be two integers. Then for any integer $N>24$,
		$$ |\mathcal{T}_{k_1,~k_2}(p^{N})| \gg_{k_1,~k_2} ~p^{2\floor{N/4}} \text{ as the prime }  p \longrightarrow \infty.$$
\end{thm}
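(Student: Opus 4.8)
The plan is to build many pairwise non-isomorphic members of $\mathcal{T}_{k_1,k_2}(p^N)$ of the form $\pi_1\boxtimes\pi_2$, where, for a judiciously chosen $n_1\ge n_2\ge 2$, the representation $\pi_1$ (resp.\ $\pi_2$) ranges over the cuspidal automorphic representations of $\GL_2(\A_\Q)$ attached to newforms of weight $k_1$ and level $\Gamma_1(p^{n_1})$ (resp.\ weight $k_2$, level $\Gamma_1(p^{n_2})$) that are in addition supercuspidal at $p$; with the normalisations of Section~\ref{Notations} these $\pi_i$ carry exactly the highest weights prescribed in the definition of $\mathcal{T}_{k_1,k_2}$. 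By Lemma~\ref{dimension-estimate-new-forms} there are $\sim\frac{k_i-1}{4\pi^2}p^{2n_i}$ newforms of weight $k_i$ and level $\Gamma_1(p^{n_i})$, and those that are \emph{not} supercuspidal at $p$ (of ramified principal series or special type) number only $O_{k_i}(p^{n_i+o(1)})$; hence the starting family contains $\gg_{k_1,k_2}p^{2(n_1+n_2)}$ ordered pairs $(\pi_1,\pi_2)$. (Alternatively one keeps all newforms and checks that the conductor bound below survives for the non-supercuspidal local types, whose Rankin--Selberg conductor is in fact smaller.)

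Next I would verify that $\pi_1\boxtimes\pi_2$ genuinely lies in $\mathcal{T}_{k_1,k_2}(p^N)$. \textbf{Cuspidality.} By Theorem~\ref{cuspidality criterion}, $\pi_1\boxtimes\pi_2$ can fail to be cuspidal only if $\pi_1$ and $\pi_2$ are both dihedral and induced from the \emph{same} imaginary quadratic field --- the other obstruction, $\pi_2\cong\pi_1\otimes\chi$, being impossible here since $k_1\ne k_2$ forces $\pi_{1,\infty}\not\cong(\pi_2\otimes\chi)_\infty$. The number of such exceptional pairs is at most $|C_{k_1}(p^{n_1})|\cdot|C_{k_2}(p^{n_2})|\ll_\epsilon p^{\,n_1+n_2+2+2\epsilon}$ by Theorem~\ref{slight modification}, which is $o\big(p^{2(n_1+n_2)}\big)$ once $n_1+n_2\ge 3$; so $\gg_{k_1,k_2}p^{2(n_1+n_2)}$ pairs yield \emph{cuspidal} products. \textbf{Conductor and cohomology.} Each such product is unramified away from $p$, and since $\pi_{1,p},\pi_{2,p}$ are supercuspidal, Lemma~\ref{conductor relation} gives $c\big((\pi_1\boxtimes\pi_2)_p\big)=c(\pi_{1,p}\times\pi_{2,p})\le 4n_1+4n_2-12$. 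Choosing $n_1,n_2$ with $n_1+n_2=\floor{N/4}+3$ and $n_2$ a fixed value in $\{2,3\}$, this bound is $4\floor{N/4}\le N$, so $\pi_1\boxtimes\pi_2$ admits a nonzero $K_f^4(p^N)$-fixed vector; combined with the cohomologicality computed in Section~\ref{Examples} (Case~(1) when $k_1+k_2$ is odd, and, after the half-integral Tate twist, when it is even), this places $\pi_1\boxtimes\pi_2$ in $\mathcal{T}_{k_1,k_2}(p^N)$.

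The step I expect to be the real obstacle is passing from pairs to \emph{distinct} products, i.e.\ controlling the fibres of $(\pi_1,\pi_2)\mapsto\pi_1\boxtimes\pi_2$. The crucial input is a rigidity statement: if $\pi_1\boxtimes\pi_2\cong\pi_1'\boxtimes\pi_2'$ with all four factors cuspidal on $\GL_2(\A_\Q)$, with $\pi_1,\pi_1'$ of weight $k_1$ and $\pi_2,\pi_2'$ of weight $k_2$, then $\pi_1'\cong\pi_1\otimes\chi$ and $\pi_2'\cong\pi_2\otimes\chi^{-1}$ for some id\'ele class character $\chi$ of $\Q$. This should follow from strong multiplicity one together with the essentially unique factorisation of a four-dimensional Weil--Deligne representation as a tensor product of two two-dimensional pieces (unique precisely up to such a twist), and is implicit in Ramakrishnan's construction in \cite{ramakrishnan}. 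Granting it, a character $\chi$ keeping $\pi_1\otimes\chi$ in the weight-$k_1$, level-$\Gamma_1(p^{n_1})$ family and $\pi_2\otimes\chi^{-1}$ in the weight-$k_2$, level-$\Gamma_1(p^{n_2})$ family must be unramified outside $p$ and of conductor $a\le\floor{n_2/2}$ at $p$, since twisting a supercuspidal of conductor $n_2$ by a ramified character of conductor $>n_2/2$ strictly raises the conductor; with $n_2\in\{2,3\}$ there are $\ll p$ such $\chi$ (essentially the Dirichlet characters modulo $p$), so every product in our family arises from $\ll p$ pairs.

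Putting the pieces together, $|\mathcal{T}_{k_1,k_2}(p^N)|\gg_{k_1,k_2}p^{2(n_1+n_2)}/p=p^{2\floor{N/4}+5}\ge p^{2\floor{N/4}}$, which is the asserted estimate; the hypothesis $N>24$ is what guarantees $n_1=\floor{N/4}+3-n_2\ge 2$ and that all the error terms above are of strictly lower order. When $k_1$ and $k_2$ have the same parity one runs the identical argument with $\pi_1\boxtimes\pi_2$ replaced by its half-integral Tate twist, which alters neither the conductors, nor cuspidality, nor any of the counts. Besides the rigidity/fibre estimate, the only other point requiring care is the reduction to newforms supercuspidal at $p$ (so that Lemma~\ref{conductor relation} applies), handled either by the lower-order count indicated in the first paragraph or by a direct verification of the conductor bound in the remaining local cases.
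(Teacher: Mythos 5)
Your overall architecture is the paper's: count pairs of newforms of weights $k_1,k_2$ at $p$-power levels, discard the dihedral same-field pairs via Thm.~\ref{slight modification}, use the local conductor bound to land in level $p^N$, and invoke the cohomologicality computation of Sec.~\ref{Examples}; your explicit treatment of the fibres of $(\pi_1,\pi_2)\mapsto\pi_1\boxtimes\pi_2$ is in fact more careful than the paper, which says nothing about distinctness of the products and simply takes $N=4(n_1+n_2)$. However, your primary route contains a genuine quantitative error: it is \emph{not} true that the newforms of level $\Gamma_1(p^{n})$ which fail to be supercuspidal at $p$ number only $O_k(p^{n+o(1)})$. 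Every form of level $p^{n}$ whose nebentypus has conductor exactly $p^{n}$ is automatically new, and there are about (number of primitive characters mod $p^n$ of the right parity) times $\dim S_k(\Gamma_0(p^n),\psi)$, i.e.\ $\asymp_k p^{2n}$, of them; their local component at $p$ is a ramified principal series (unramified $\boxplus$ conductor $p^{n}$), since a supercuspidal or special representation of conductor $p^{n}$ has central character of conductor at most roughly $p^{\lceil n/2\rceil}$. Consequently the supercuspidal-at-$p$ newforms are $O_k\bigl(p^{\,n+\lceil n/2\rceil+1}\bigr)$, a vanishing proportion, and your ``starting family'' of supercuspidal pairs is far too small to produce $\gg p^{2\floor{N/4}}$ products once $N$ is large; no lower bound for it is available of the size you assert.

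The repair is exactly the alternative you parenthesized: keep \emph{all} newforms and verify the conductor bound for the remaining local types. If at least one of $\pi_{1,p},\pi_{2,p}$ is a principal series or special, then writing the Rankin--Selberg factor as a sum of twists gives $c(\pi_{1,p}\times\pi_{2,p})\le 2\bigl(c(\pi_{1,p})+c(\pi_{2,p})\bigr)\le 2(n_1+n_2)$, which with your choice $n_1+n_2=\floor{N/4}+3$ (and $N>24$) is $\le 4\floor{N/4}\le N$, so the conclusion that $\pi_1\boxtimes\pi_2$ has a $K_f^4(p^N)$-fixed vector survives. But then your fibre estimate must be redone, because it used supercuspidality at $p$: twisting a ramified principal series of conductor $p^{n_2}$ by a ramified character need not raise the conductor, so the twisting character $\chi$ may have conductor up to $p^{n_2}$ at $p$. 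Since $\chi$ is still forced to be unramified outside $p$ and $n_2\in\{2,3\}$ is fixed, the fibres are $O(p^{n_2})\le O(p^{3})$, and the slack you built in ($2(n_1+n_2)=2\floor{N/4}+6$) still yields $|\mathcal{T}_{k_1,k_2}(p^{N})|\gg_{k_1,k_2}p^{2\floor{N/4}+3}$, which suffices. With these two corrections your argument closes; as written, the supercuspidal-count claim is the step that fails.
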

\begin{proof} We first consider the {\it different parity} case where $k_1 + k_2$ is odd, $k_1$ is odd and $k_2$ is even. Let $p$ be an odd prime. Suppose $\pi_1 ,\pi_2$ are cuspidal automorphic representations of $\GL_2(\mathbb{A}_{\Q})$ corresponding to the level structure $K_f^2(p^{n_1})$ and $K_f^2(p^{n_2})$, respectively, with $n_1, n_2\geq1$ and of weights $\left(\dfrac{k_1-1}{2},\dfrac{1-k_1}{2}\right)$ and $\left(\dfrac{k_2}{2}-1,~1-\dfrac{k_2}{2}\right)$, respectively.\smallskip

Using Thm. \ref{cuspidality criterion}, we conclude that exactly one of the following holds:\smallskip
\begin{itemize}
\item $(\pi_1 ,\pi_2) \in C'_{k_1,~k_2}(p^{n_1},p^{n_2})$\smallskip

\item $(\pi_1 ,\pi_2) \in C^0_{k_1,~k_2}(p^{n_1},p^{n_2})$ 
\smallskip

\item $\pi_1 \boxtimes \pi_2 \in \bigcup \limits_{n\geq1} \mathcal{T}_{k_1,~k_2}(p^n)$.
\smallskip
\end{itemize}

 Let $\pi_1=\bigotimes_{q\leq \infty} \pi_{1,q}$ and $\pi_2=\bigotimes_{q\leq \infty} \pi_{2,q}$. We know that $\pi_{1,p}$ (resp $\pi_{2,p}$) corresponds to a unique newform in $S_{k_1}^{\rm{new}}(\Gamma_1(p^{s_1}))$ (resp. $S_{k_2}^{\rm{new}}(\Gamma_1(p^{s_2}))$) with $1 \leq s_1 \leq n_1$ (resp $1 \leq s_2 \leq n_2$). Note that, $c(\pi_{1,p})=s_1$ and $c(\pi_{2,p})=s_2$.
 
 We want to count the contributions of supercuspidal representations $\pi_{i,p}, i=1,2$ to $\bigcup \limits_{n\geq1} \mathcal{T}_{k_1,~k_2}(p^n)$. Using Lem.~ \ref{conductor relation}, we can conclude that two newforms in $S_{k_1}^{\rm{new}}(\Gamma_1(p^{s_1}))$ and $S_{k_2}^{\rm{new}}(\Gamma_1(p^{s_2}))$ that are obtained from the supercuspidal representations $\pi_{1,p}$ and $\pi_{2,p}$ respectively, can contribute to $\mathcal{T}_{k_1,~k_2}(p^n)$ only if $$2 \leq n \leq 4c(\pi_{1,p})+4c(\pi_{2,p})-12.$$
 
 Since $1 \leq s_i \leq n_i, i=1,2 $, we have, 
 $$ 2 \leq n \leq 4n_1+4n_2-12.$$
 Note that, $n_i \geq 2$ since $s_i\geq 2$ as $\pi_{i,p}$ is supercuspidal, $i=1,2$. Hence, we conclude that, automorphic tensor product of any pair of supercuspidal representations in the set  
 
 $\bigoplus \limits_{1\leq i\leq n_1}S_{k_1}^\text{new}(\Gamma_1(p^i)) \times \bigoplus \limits_{1\leq i\leq n_2}S_{k_2}^\text{new}(\Gamma_1(p^i)) ~ \setminus~ \{ C^0_{k_1,~k_2}(p^{n_1},p^{n_2}) \cup C'_{k_1,~k_2}(p^{n_1},p^{n_2}) \} $
  is also in the set $\mathcal{T}_{k_1,~k_2}(p^{4n_1+4n_2-12})$. \smallskip
 
 Since we have the following bound
 $$ \sum \limits_{1\leq i\leq n_1}{\rm dim}_{\mathbb{C}} S_{k}^\text{new}(\Gamma_1(p^i)) \gg_k p^{2n_1}$$
  and we also have
 $$ | C^0_{k_1,~k_2}(p^{n_1},p^{n_2}) \cup C'_{k_1,~k_2}(p^{n_1},p^{n_2}) | \ll_{k_1,~k_2} p^{n_1+n_2+4}, $$ 
  which is negligible compared to $p^{2n_1+2n_2}$ for $p \longrightarrow \infty$ and $n_1+n_2 > 4$, we conclude that
 $$ p^{2n_1+2n_2} \ll_{k_1,~k_2} |\mathcal{T}_{k_1,~k_2}(p^{4n_1+4n_2-12})|   \text{ as }  p \longrightarrow \infty \text{ and } n_1+n_2 > 4.$$
In particular, taking $N = 4(n_1+n_2)$ with $N>24$, we have,	
  $$ |\mathcal{T}_{k_1,~k_2}(p^{N})| \gg_{k_1,~k_2} p^{2 \floor{N/4}} \text{ as }  p \longrightarrow \infty.$$
 
 The {\it same parity} case where $k_1 + k_2$ is even can be handled in an exactly same manner by considering $(\pi_1 \boxtimes \pi_2) \otimes | \cdot |^{1/2}$ instead of $\pi_1 \boxtimes \pi_2$. 
\end{proof} 
\smallskip

 Using Lem. \ref{dimension-estimate-cusp-forms} we have an upper bound as stated below. \smallskip
 
\begin{cor}
Let $N \in \mathbb{N}$.  Then, 
$$|\mathcal{T}_{k_1,~k_2}(p^{N})| \ll_{k_1,~k_2} ~p^{2N} \text{ as } p \longrightarrow \infty.$$
\end{cor}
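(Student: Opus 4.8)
The plan is to compare $\mathcal T_{k_1,k_2}(p^N)$ with a product of two spaces of cusp forms of level at most $p^{\floor{N/2}}$ and then finish by Lemma~\ref{dimension-estimate-cusp-forms}.

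First I would pass from $\GL_4$ back to pairs of newforms on $\GL_2$. Every $\Pi\in\mathcal T_{k_1,k_2}(p^N)$ is by definition of the form $\pi_1\boxtimes\pi_2$ with $\pi_1,\pi_2$ cuspidal on $\GL_2(\A_\Q)$ of the prescribed archimedean types. Comparing the local Rankin--Selberg $L$- and $\varepsilon$-factors at every place, together with strong multiplicity one for $\GL_2$, shows that $\Pi$ recovers the unordered pair $\{\pi_1,\pi_2\}$ up to a simultaneous twist $(\pi_1,\pi_2)\rightsquigarrow(\pi_1\otimes\chi,\pi_2\otimes\chi^{-1})$ by an id\`ele class character $\chi$ (with at most finitely many further coincidences in the dihedral case, which only help the count), and the ordering is forced because $k_1\ne k_2$ makes $\pi_{1,\infty}$ inequivalent to any twist of $\pi_{2,\infty}$. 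Moreover $\pi_1$ and $\pi_2$ are unramified outside $p$: if $\pi_{i,q}$ were ramified at some $q\ne p$, then the restriction to inertia of $\rho_{1,q}\otimes\rho_{2,q}$ (for $\rho_{i,q}$ the local Weil--Deligne parameters of $\pi_{i,q}$) would be non-trivial, so $\Pi$ itself would be ramified at $q$. Hence $\pi_1$ (resp.\ $\pi_2$) corresponds to a normalised Hecke newform of weight $k_1$ (resp.\ $k_2$) of $p$-power level.

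The substantive step, and the one I expect to be the main obstacle, is a local conductor estimate at $p$. Writing $c_i:=c(\pi_{i,p})$, membership in $\mathcal T_{k_1,k_2}(p^N)$ is exactly the inequality $c(\pi_{1,p}\times\pi_{2,p})\le N$, and I would use the simultaneous-twist freedom (which leaves $\rho_{1,p}\otimes\rho_{2,p}$, and hence $\Pi$, unchanged) to arrange that $c_1\le N/2$ and $c_2\le N/2$. When $c_1\ne c_2$ this is automatic, since then $c(\pi_{1,p}\times\pi_{2,p})=2\max(c_1,c_2)$. When $c_1=c_2$ the Rankin--Selberg conductor can a priori drop, but only when $\rho_{2,p}\cong\rho_{1,p}^{\vee}\otimes\eta$ for some character $\eta$; in that case the simultaneous twist minimising the conductor of $\pi_{1,p}$ also minimises that of $\pi_{2,p}$, and both minimal conductors are controlled by (a bounded constant plus half of) $c(\pi_{1,p}\times\pi_{2,p})$, where one uses cuspidality of $\Pi$ --- via Thm.~\ref{cuspidality criterion} and $k_1\ne k_2$ --- to exclude the global identity $\pi_2\cong\pi_1\otimes\chi$. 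Making this Weil--Deligne bookkeeping clean is the delicate point; note that the much weaker bound $c_i\le c(\pi_{1,p}\times\pi_{2,p})\le N$ is elementary, and it is precisely the factor-of-two refinement that upgrades the final exponent from $4N$ to $2N$.

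Granting this, $\pi_1$ is a weight-$k_1$ newform of level dividing $p^{\floor{N/2}}$ and $\pi_2$ a weight-$k_2$ newform of level dividing $p^{\floor{N/2}}$, so
$$|\mathcal T_{k_1,k_2}(p^N)|\ \le\ \dim_{\C}S_{k_1}\!\big(\Gamma_1(p^{\floor{N/2}})\big)\cdot\dim_{\C}S_{k_2}\!\big(\Gamma_1(p^{\floor{N/2}})\big).$$
By Lemma~\ref{dimension-estimate-cusp-forms} each factor on the right is $\ll_{k_i}p^{2\floor{N/2}}\le p^{N}$ as $p\to\infty$, and multiplying gives $|\mathcal T_{k_1,k_2}(p^N)|\ll_{k_1,k_2}p^{2N}$. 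In the same-parity case one runs the identical argument with $\pi_1\boxtimes\pi_2$ replaced by its half-integral Tate twist $(\pi_1\boxtimes\pi_2)\otimes|\cdot|^{1/2}$, which does not affect conductors.
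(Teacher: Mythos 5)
Your final counting step would be fine if the reduction preceding it were available, but the key claim --- that after a simultaneous twist one may assume $c(\pi_{1,p}),c(\pi_{2,p})\le N/2$ --- is not just ``delicate'', it is false, and the failure occurs precisely in the case you try to wave through. Nothing in the definition of $\mathcal T_{k_1,k_2}(p^N)$ prevents the \emph{local} components at $p$ from being twist-equivalent even though the \emph{global} representations are not (cuspidality of $\pi_1\boxtimes\pi_2$ only uses the global non-equivalence, which is automatic from $k_1\ne k_2$). Concretely, let $E/\mathbb{Q}_p$ be the unramified quadratic extension with conjugation $\sigma$, let $\xi$ be a character of $E^\times$ of conductor exponent $m$ whose leading character on $1+\mathfrak{p}_E^{m-1}$ does not come from the base (so $\tau=\mathrm{Ind}_{E/\mathbb{Q}_p}(\xi)$ is a supercuspidal with $c(\tau\otimes\chi)=2m$ for \emph{every} character $\chi$ of $\mathbb{Q}_p^\times$), and take newforms $f_1,f_2$ of weights $k_1,k_2$ with $\pi_{1,p}\cong\tau$ and $\pi_{2,p}\cong\tilde\tau$. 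Then $\pi_1\boxtimes\pi_2$ is cuspidal, and its conductor exponent at $p$ is $a(\tau\otimes\tilde\tau)=a\bigl(1\oplus\omega_{E/\mathbb{Q}_p}\oplus\mathrm{Ind}(\xi/\xi^\sigma)\bigr)=2a(\xi/\xi^\sigma)=2m$, so this representation lies in $\mathcal T_{k_1,k_2}(p^{N})$ with $N=2m$, while every simultaneous twist $(\pi_1\otimes\chi,\pi_2\otimes\chi^{-1})$ leaves both local conductors equal to $2m=N$, not $N/2$. So the inequality $|\mathcal T_{k_1,k_2}(p^N)|\le\dim_{\C}S_{k_1}(\Gamma_1(p^{\lfloor N/2\rfloor}))\cdot\dim_{\C}S_{k_2}(\Gamma_1(p^{\lfloor N/2\rfloor}))$ is not justified: such $\Pi$ are simply not counted on the right. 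Your two supporting assertions also fail: $c_1\ne c_2$ does not force $c(\pi_{1,p}\times\pi_{2,p})=2\max(c_1,c_2)$ (e.g.\ $\rho_1=\chi\oplus 1$ of conductor $m$ and $\rho_2=\chi^{-1}\oplus\eta$ with $a(\eta)=1$ give tensor conductor $2m+1$), and a drop of the Rankin--Selberg conductor is not confined to $\rho_{2,p}\cong\rho_{1,p}^{\vee}\otimes\eta$ (it occurs whenever $\xi_1\xi_2$ or $\xi_1\xi_2^\sigma$ has small conductor, a much weaker condition than being a norm pullback). The same twist loophole affects your preliminary step: $\Pi$ unramified at $q\ne p$ does not force $\pi_{i,q}$ unramified, since inertia may act through a character on one factor and its inverse on the other.

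The morally correct repair is not to force both conductors below $N/2$ (impossible, as above) but to observe that exactly in this regime the map $(\pi_1,\pi_2)\mapsto\pi_1\boxtimes\pi_2$ has very large fibres --- a whole orbit of simultaneous twists by characters of $p$-power conductor --- so one must count twist-orbits of pairs (or pairs subject to the actual conductor constraint at $p$), which is a genuinely different bookkeeping from the one you set up. For comparison, the paper does not attempt your reduction at all: its proof of the corollary is a direct appeal to the dimension estimate of Lemma~\ref{dimension-estimate-cusp-forms} for the pairs of eigenforms giving rise to elements of $\mathcal T_{k_1,k_2}(p^N)$, with no intermediate claim that both members of the pair can be taken of level dividing $p^{\lfloor N/2\rfloor}$.
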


\section{Possible Overlap between symmetric cube and automorphic tensor product representations of $\GL_4$}\label{overlap}

It is natural to investigate the overlap between the two types of contributions to the cohomology of $\GL_4$, the symmetric cube of a representation of $\GL_2$ and the automorphic tensor product of two representations of $\GL_2$.
\smallskip

\subsection{Comparison of representations at infinity} We first describe the representations at infinity. Now suppose $k_1, k_2, k_3 $ are integers $\geq 2$, $k_1 > k_2$ and they are weights of cusp forms corresponding to three automorphic representations $\pi_1, \pi_2, \pi_3$ of $\GL_2$, respectively. \smallskip

From the discussion in Sec. \ref{Examples}, we conclude that for a half-integer $s$,

 \begin{equation}\label{auto-tensor-infty}
((\pi_1 \boxtimes \pi_2)  \otimes | \cdot |^s )_{\infty} =
\begin{cases}
J(0,\ell)  \otimes | \cdot |_\infty^s \quad & \text{if}~k_1 + k_2 ~\text{is odd} \\
J(1,\ell) \otimes | \cdot |_{\infty}^{s-1/2} & \text{if}~k_1 + k_2 ~\text{is even}
\end{cases}
 \end{equation}
 where in both cases,
 $ \ell= (k_1 + k_2-2,~ k_1-k_2,~ k_2-k_1, ~-k_1-k_2+2).$ \medskip

On the other hand, $({\rm Sym^3}(\pi_3) \otimes | \cdot |^s )_{\infty}$ is given by \cite[Thm 5.5]{raghuram-shahidi}

 \begin{equation}\label{sym-3-infty}
(  {\rm Sym^3}(\pi_3)  \otimes | \cdot |^s )_{\infty}= 
 \begin{cases}
 J(1, \ell') \otimes | \cdot |_{\infty}^{s-1/2}\quad & \text{if}~k_3 ~\text{is odd} \\
J(0,\ell')  \otimes | \cdot |_{\infty}^{s} & \text{if}~k_3~\text{is even}
  \end{cases}
 \end{equation}
 where in both cases,
 $ \ell'= (3k_3 -3, ~k_3 -1, ~1- k_3, ~3- 3k_3).$\smallskip

Refer to Sec. \ref{Coh-rep} for the discussion of cohomological representations $J(w, \ell)$. It follows that for a half-integer $s$ (i.e. $2s \in \Z$), the twisted representation $J(w, \ell)  \otimes | \cdot |^s_{\infty}$ is cohomological iff $s \in \Z$ (i.e. $2s$ is even) and in that case,
\begin{equation} \label{Tate-twist-coh} J(w, \ell)  \otimes | \cdot |^s_{\infty} = J(w+2s, \ell)
\end{equation}

We analyze all possible scenarios as follows. \smallskip

\begin{enumerate}[label=(\alph*)]
\item Suppose $k_1+k_2$ and $k_3$ are even. Let $s$ be a half-integer (i.e. $2s \in \Z$). Then from \eqref{auto-tensor-infty} and  \eqref{Tate-twist-coh}, we conclude that $(\pi_1 \boxtimes \pi_2 )\otimes | \cdot |^{s} $ is cohomoloigcal if and only if $2s$ is odd, and in that case it equals $J(2s, \ell)$.
Since ${\rm Sym^3}(\pi_3)_{\infty} = J(0, \ell')$, we conclude that ${\rm Sym^3}(\pi_3) \otimes | \cdot |^{s} \neq 
\pi_1 \boxtimes \pi_2$ for any half integer $s$.\smallskip

\item Suppose  $k_1+k_2$ and $k_3$ are odd. Let $s$ be a half-integer (i.e. $2s \in \Z$). Then from \eqref{sym-3-infty} and \eqref{Tate-twist-coh} we conclude that
   ${\rm Sym^3}(\pi_3) \otimes | \cdot |^{s}$ is cohomoloigcal if and only if $2s$ is odd, and in that case it equals $J(2s, \ell')$. Since
  $
(\pi_1 \boxtimes \pi_2)_{\infty} = J(0, \ell)
$, we conclude that ${\rm Sym^3}(\pi_3) \otimes | \cdot |^{s} \neq 
\pi_1 \boxtimes \pi_2$ for any half integer $s$.\smallskip

\item Suppose $k_1 + k_2$ and $k_3$ are of different parity and $k_1$ is even. Thus  $k_2 , k_3$ have different parity. From  \eqref{auto-tensor-infty} and \eqref{sym-3-infty}, we conclude that: \smallskip

\noindent $((\pi_1 \boxtimes \pi_2)  \otimes | \cdot |^s )_{\infty} = {\rm Sym^3}(\pi_3)_{\infty}$ for some half integer $s$ (i.e. $2s \in \Z$) if and only if $s = 0$ and $\ell = \ell'$. \smallskip

If $\ell = \ell'$, then we have $$\frac{k_1 - 1}{2} = k_2 = k_3$$ 
This is impossible since we assumed that $k_1$ is even and $k_2 , k_3$ are of different parity. So we conclude that ${\rm Sym^3}(\pi_3) \otimes | \cdot |^{s} \neq 
\pi_1 \boxtimes \pi_2$ for any half integer $s$.
\smallskip

\item Suppose $k_1 + k_2$ and $k_3$ are of different parity and $k_1$ is odd. Thus $k_2 , k_3$ are of same parity. Similar to the previous case, we have the following necessary conditions for ${\rm Sym^3}(\pi_3) \otimes | \cdot |^{s} =
\pi_1 \boxtimes \pi_2$ for some half integer $s$  (i.e. $2s \in \Z$):\smallskip

\begin{itemize}
\item $\ell = \ell'$, i.e. $\dfrac{k_1 - 1}{2} = k_2 = k_3$\smallskip
\item $s = 0$
\end{itemize}\smallskip

This case is inconclusive and we can not rule out the possibility of an overlap using the method of comparing the representations at infinity.

\end{enumerate}
\smallskip

\subsection{Proof of `lack of overlap'}

 Now we prove that the overlap is in fact impossible up to any twist by a Hecke character of $\GL_1(\A_\Q)$ under suitable conditions on the representations being cuspidal and cohomological.\smallskip

\begin{prop}\label{no-overlap}
Let $\pi_1, \pi_2, \pi_3$ be three cuspidal automorphic representations of ${\GL}_2{(\mathbb{A}_{\Q})}$ such that they are given by cusp forms of weights $k_1, k_2, k_3 \geq 2$ and $\pi_1 \boxtimes \pi_2$ and ${{\rm Sym}}^3{(\pi_3)}$ are cuspidal automorphic representations of ${\GL}_4{(\mathbb{A}_{\Q})}$. \smallskip

Then there is no Hecke character of $\GL_1(\A_\Q)$ such that~
$ \pi_1 \boxtimes \pi_2 = {\rm Sym}^3{(\pi_3)} \otimes \chi$.
\end{prop}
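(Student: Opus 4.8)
The plan is to argue by contradiction. Suppose there is a Hecke character $\chi$ of $\GL_1(\A_\Q)$ with $\Pi := \pi_1 \boxtimes \pi_2 \cong \Sym^3(\pi_3) \otimes \chi$; by hypothesis $\Pi$ is a cuspidal automorphic representation of $\GL_4(\A_\Q)$. I would exploit the fact that $\Pi$ then carries \emph{two} essential self-dualities of opposite parity: the one coming from its realisation as a tensor product $\pi_1 \boxtimes \pi_2$ is of orthogonal type, while the one coming from its realisation as a twist of the symmetric cube $\Sym^3(\pi_3)$ is of symplectic type. Concretely this can be packaged via the exterior square $\Lambda^2 \Pi$, which is automorphic on $\GL_6$ by Kim.

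Using the $\GL_2$-level identities $\Lambda^2(V_1 \otimes V_2) = (\Sym^2 V_1 \otimes \det V_2) \oplus (\det V_1 \otimes \Sym^2 V_2)$ and $\Lambda^2(\Sym^3 V) = (\Sym^4 V \otimes \det V) \oplus (\det V)^{\otimes 3}$ for two-dimensional $V, V_1, V_2$, together with the fact that $\Sym^2(\pi_i)$ (Gelbart--Jacquet) and $\Sym^4(\pi_3)$ (Kim--Shahidi) are automorphic, the assumed isomorphism would give the equality of isobaric representations
\[
\big(\Sym^2(\pi_1) \otimes \omega_{\pi_2}\big) \boxplus \big(\omega_{\pi_1} \otimes \Sym^2(\pi_2)\big) \;=\; \big(\Sym^4(\pi_3) \otimes \omega_{\pi_3}\chi^2\big) \boxplus \big(\omega_{\pi_3}^3\chi^2\big).
\]
The right-hand side always contains the one-dimensional isobaric summand $\omega_{\pi_3}^3\chi^2$.

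The point on which the argument turns is that $\Sym^4(\pi_3)$ is \emph{cuspidal} on $\GL_5$. Since $\Sym^3(\pi_3)$ is cuspidal, $\pi_3$ is neither dihedral nor of tetrahedral type; and since $\pi_3$ is attached to a cusp form of weight $k_3 \ge 2$, its Deligne $\ell$-adic representation $\rho_{\pi_3}$ has distinct Hodge--Tate weights $0$ and $k_3 - 1$, hence cannot have finite projective image, so $\pi_3$ is not of octahedral or icosahedral type either. By the Kim--Shahidi criterion $\Sym^4(\pi_3)$ is then cuspidal, so $\Sym^4(\pi_3) \otimes \omega_{\pi_3}\chi^2$ is a cuspidal constituent of degree $5$ on the right. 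On the left: if $\pi_1$ and $\pi_2$ are both non-dihedral, each $\Sym^2(\pi_i)$ is cuspidal on $\GL_3$, so the left-hand side is a sum of two cuspidal $\GL_3$-representations and has \emph{no} one-dimensional constituent, contradicting the right-hand side; if at least one $\pi_i$ is dihedral, then $\Sym^2(\pi_i) = \eta_i \boxplus \tau_i$ with $\eta_i$ a Hecke character and $\tau_i$ of degree at most $2$, so every isobaric constituent of the left-hand side has degree $\le 3$, contradicting the degree-$5$ cuspidal constituent on the right. Either way the uniqueness of isobaric decomposition (Jacquet--Shalika) is violated. An equivalent, more intrinsic finish: $\pi_i$ carries a $\GL_2(\A_\Q)$-invariant \emph{alternating} pairing $\pi_i \otimes \pi_i \to \omega_{\pi_i}$ because $\Lambda^2\pi_i = \omega_{\pi_i}$, so $\Pi$ carries an invariant \emph{symmetric} pairing into $\omega_{\pi_1}\omega_{\pi_2}$ (a tensor product of two alternating forms is symmetric); but $\Sym^3$ of an alternating pairing on a two-dimensional space is alternating, so $\Sym^3(\pi_3)\otimes\chi$ carries an invariant \emph{alternating} pairing into $\omega_{\pi_3}^3\chi^2$. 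As $\Pi$ is cuspidal and, by the same non-polyhedrality, admits no nontrivial self-twist, these two pairings occupy the same one-dimensional space of invariant bilinear forms, which cannot be simultaneously symmetric and alternating in characteristic $0$.

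The main obstacle is exactly the input used above that $\pi_3$ is not of octahedral (or icosahedral) type, equivalently that $\Sym^4(\pi_3)$ is cuspidal: this is where the weight hypothesis $k_3 \ge 2$ is essential, and it is the only step that goes beyond known cases of functoriality for $\GL_2 \times \GL_2$ and symmetric powers together with uniqueness of isobaric sums. By comparison, the dihedral bookkeeping and the verification that the parity of the canonical pairings passes correctly through Ramakrishnan's $\boxtimes$ and through $\Sym^3$ are routine.
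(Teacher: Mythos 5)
Your proposal is correct and follows essentially the same route as the paper: compare $\Lambda^2(\pi_1\boxtimes\pi_2)$ with $\Lambda^2(\mathrm{Sym}^3(\pi_3)\otimes\chi)$ via the same isobaric decompositions, show $\mathrm{Sym}^4(\pi_3)$ is cuspidal using $k_3\ge 2$, and contradict uniqueness of the isobaric decomposition since the left side has no $\GL_5$ constituent. The only cosmetic difference is that you rule out octahedral/icosahedral type via Hodge--Tate weights of the attached Galois representation, whereas the paper invokes Langlands--Tunnell to reduce to the excluded weight-one case; your extra symplectic-versus-orthogonal ``pairing parity'' finish is an optional alternative not used in the paper.
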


\begin{proof}
Suppose $\pi_1 \boxtimes \pi_2 = {\rm Sym}^3{(\pi_3)} \otimes \chi$ for some Hecke character $\chi$. This gives
\begin{equation}\label{ext-sq}
 \Lambda^2(\pi_1 \boxtimes \pi_2) = \Lambda^2({\rm Sym}^3{(\pi_3)} \otimes \chi)
\end{equation}

Let $\omega_{\pi_{i}}$ denote the central character of $ \pi_{i}$ for $i =1,2,3$. From (\cite{asgari-raghuram}, Proposition 3.1), \cite{kim}, we have the following isobaric decompositions of the exterior square of representations.
\begin{equation}\label{ext-sq-LHS-RHS}
\begin{split}
 \Lambda^2(\pi_1\boxtimes \pi_2) &~ =~ ({\rm Sym}^2{(\pi_1)}\otimes \omega_{\pi_2}) \text{ } \boxplus \text{ } ({\rm Sym}^2{(\pi_2)}\otimes \omega_{\pi_1}) \\
    \Lambda^2({\rm Sym}^3{(\pi_3)} \otimes \chi) &~ =~ ({\rm Sym}^4{(\pi_3)} \otimes \chi^2\omega_{\pi_3}) \text{ } \boxplus \text{ } \chi^2\omega_{\pi_3}^3
    \end{split}
\end{equation}

Combining the two equations \eqref{ext-sq} and \eqref{ext-sq-LHS-RHS}, we get
\begin{equation}~\label{isobaric}
{\rm Sym}^2{(\pi_1)}\otimes \omega_{\pi_2} \otimes (\chi^2\omega_{\pi_3}^3)^{-1} \text{ } \boxplus \text{ } {\rm Sym}^2{(\pi_2)}\otimes \omega_{\pi_1} \otimes (\chi^2\omega_{\pi_3}^3)^{-1} ~
 = ~  {\rm Sym}^4{(\pi_3)} \otimes \omega_{\pi_3}^{-2}  \text{ } \boxplus \text{ } 1
\end{equation}

We assumed that ${\rm Sym}^3(\pi_3)$ is cuspidal. If ${\rm Sym}^4(\pi_3)$ is not cuspidal, then using Langlands-Tunnel theorem (see \cite[Thm.1.3, Gelbart's article]{Gelbart}), we conclude that $\pi_3$ must have come from a cusp form of weight one and this already contradicts the assumption on the weight for $\pi_3$ i.e., $k_3 \geq 2$. Thus ${\rm Sym}^4(\pi_3)$ is cuspidal. \smallskip

Thus the first term ${\rm Sym}^4{(\pi_3)} \otimes \omega_{\pi_3}^{-2}$ on RHS in ~\eqref{isobaric} is a representation of $\GL_5(\A_\Q)$ and it will not split further into an isobaric sum with respect to lower dimensional $\GL_{n'}$. We get a contradiction as there is no $\GL_5$-isobaric term on the LHS of Eq.~\eqref{isobaric}. \smallskip
\end{proof}

\begin{rem} The authors of this manuscript established lower bound estimates on the contribution of symmetric cube transfer representations to the cohomology of the $\GL_4$  in \cite{chandrasheel-sudipa}. Prop. \ref{no-overlap} establishes  `lack of overlap between symmetric cube and automorphic tensor product' and hence the estimates obtained in this manuscript improve the lower bound on the total contribution of cuspidal automorphic representations to the cohomology of the $\GL_4$. The case of Asai transfer from ${\GL_{2}} |_ {\K}$ for some imaginary quadratic extension $\K$ over $\Q$ is remaining to be explored. One can ask  these questions for the $\GL_4$ over a general number field and it will be interesting to know whether the similar estimates hold in that case also.
\end{rem}

{\it Acknowledgements:} 
Both the authors are thankful to A. Raghuram for valuable discussions regarding various aspects of this manuscript, in particular about the proof of  `lack of overlap between symmetric cube and automorphic tensor product'. Chandrasheel Bhagwat would like to acknowledge the support of MATRICS research grant MTR/2018/000102 from the Science and Engineering Research Board, Department of Science and Technology, Government of India during this work. Sudipa Mondal would like to acknowledge the support of CSIR PhD fellowship during this work.

\end{document}